\documentclass[a4paper,twoside]{article}
\usepackage{a4}
\usepackage{amssymb}
\usepackage{amsmath}
\usepackage{upref}
\usepackage[active]{srcltx}
\usepackage[colorlinks,citecolor=blue,linkcolor=blue]{hyperref}
\usepackage[dvipsnames]{color}
\allowdisplaybreaks[2] 
%
%
%
\newcount\minutes \newcount\hours
\hours=\time
\divide\hours 60
\minutes=\hours
\multiply\minutes -60
\advance\minutes \time
\newcommand{\klockan}{\the\hours:{\ifnum\minutes<10 0\fi}\the\minutes}
\newcommand{\tid}{\today\ \klockan}
\newcommand{\prtid}{\smash{\raise 10mm \hbox{\LaTeX ed \tid}}}
\renewcommand{\prtid}{}
%
%
\makeatletter
\pagestyle{headings}
\headheight 10pt
\def\sectionmark#1{} 
\def\subsectionmark#1{}
\newcommand{\sectnr}{\ifnum \c@secnumdepth >\z@
                 \thesection.\hskip 1em\relax \fi}
\def\@evenhead{\footnotesize\rm\thepage\hfil\leftmark\hfil\llap{\prtid}}
\def\@oddhead{\footnotesize\rm\rlap{\prtid}\hfil\rightmark\hfil\thepage}
\def\tableofcontents{\section*{Contents} 
 \@starttoc{toc}}
\makeatother
%
%
\makeatletter
\def\@biblabel#1{#1.}
\makeatother
%
%
%
\makeatletter
\let\Thebibliography=\thebibliography
\renewcommand{\thebibliography}[1]{\def\@mkboth##1##2{}\Thebibliography{#1}
\addcontentsline{toc}{section}{References}
\frenchspacing 
\setlength{\@topsep}{0pt}
\setlength{\itemsep}{0pt}%
\setlength{\parskip}{0pt plus 2pt}%
}
\makeatother
%
%
\makeatletter
\def\mdots@{\mathinner.\nonscript\!.%
 \ifx\next,.\else\ifx\next;.\else\ifx\next..\else
 \nonscript\!\mathinner.\fi\fi\fi}
\let\ldots\mdots@
\let\cdots\mdots@
\let\dotso\mdots@
\let\dotsb\mdots@
\let\dotsm\mdots@
\let\dotsc\mdots@
\def\vdots{\vbox{\baselineskip2.8\p@ \lineskiplimit\z@
    \kern6\p@\hbox{.}\hbox{.}\hbox{.}\kern3\p@}}
\def\ddots{\mathinner{\mkern1mu\raise8.6\p@\vbox{\kern7\p@\hbox{.}}%
    \raise5.8\p@\hbox{.}\raise3\p@\hbox{.}\mkern1mu}}
\makeatother
%
%
\makeatletter
\let\Enumerate=\enumerate
\renewcommand{\enumerate}{\Enumerate%
\setlength{\@topsep}{0pt}
\setlength{\itemsep}{0pt}%
\setlength{\parskip}{0pt plus 1pt}%
\renewcommand{\theenumi}{\textup{(\alph{enumi})}}%
\renewcommand{\labelenumi}{\theenumi}%
}
\let\endEnumerate=\endenumerate
\renewcommand{\endenumerate}{\endEnumerate\unskip}
\newcounter{saveenumi}
\makeatother
%
%
\makeatletter
\def\@seccntformat#1{\csname the#1\endcsname.\quad}
\makeatother
%
%
\newcommand{\authortitle}[2]{\author{#1}\title{#2}\markboth{#1}{#2}}
%
%
\newcommand{\art}[6]{{\sc #1, \rm #2, \it #3\/ \bf #4 \rm (#5), \mbox{#6}.}}

\newcommand{\auth}[2]{{#2. #1}}
\newcommand{\book}[3]{{\sc #1, \it #2, \rm #3.}}
\newcommand{\AND}{{\rm and }}
%
%
\RequirePackage{amsthm}
\newtheoremstyle{descriptive}%
  {\topsep}   
  {\topsep}   
  {\rmfamily} 
  {}          
  {\bfseries} 
  {.}         
  { }         
  {}          
\newtheoremstyle{propositional}%
  {\topsep}   
  {\topsep}   
  {\itshape}  
  {}          
  {\bfseries} 
  {.}         
  { }         
  {}          
\theoremstyle{propositional}
\newtheorem{thm}{Theorem}[section]
\newtheorem{theo}[thm]{Theorem}   
\newtheorem{prop}[thm]{Proposition}
\newtheorem{lem}[thm]{Lemma}
\theoremstyle{descriptive}
\newtheorem{definition}[thm]{Definition}

\newtheorem{remark}[thm]{Remark}

%
%
%
%
%
\makeatletter
\renewenvironment{proof}[1][\proofname]{\par
  \pushQED{\qed}%
  \normalfont
  \trivlist
  \item[\hskip\labelsep
        \itshape
    #1\@addpunct{.}]\ignorespaces
}{%
  \popQED\endtrivlist\@endpefalse
}
\makeatother
{\catcode`p =12 \catcode`t =12 \gdef\eeaa#1pt{#1}}      
\def\accentadjtext#1{\setbox0\hbox{$#1$}\kern   
                \expandafter\eeaa\the\fontdimen1\textfont1 \ht0 }
\def\accentadjscript#1{\setbox0\hbox{$#1$}\kern 
                \expandafter\eeaa\the\fontdimen1\scriptfont1 \ht0 }
\def\accentadjscriptscript#1{\setbox0\hbox{$#1$}\kern   
                \expandafter\eeaa\the\fontdimen1\scriptscriptfont1 \ht0 }
\def\accentadjtextback#1{\setbox0\hbox{$#1$}\kern       
                -\expandafter\eeaa\the\fontdimen1\textfont1 \ht0 }
\def\accentadjscriptback#1{\setbox0\hbox{$#1$}\kern     
                -\expandafter\eeaa\the\fontdimen1\scriptfont1 \ht0 }
\def\accentadjscriptscriptback#1{\setbox0\hbox{$#1$}\kern 
                -\expandafter\eeaa\the\fontdimen1\scriptscriptfont1 \ht0 }
\def\itoverline#1{{\mathsurround0pt\mathchoice
        {\rlap{$\accentadjtext{\displaystyle #1}
                \accentadjtext{\vrule height1.593pt}
                \overline{\phantom{\displaystyle #1}
                \accentadjtextback{\displaystyle #1}}$}{#1}}
        {\rlap{$\accentadjtext{\textstyle #1}
                \accentadjtext{\vrule height1.593pt}
                \overline{\phantom{\textstyle #1}
                \accentadjtextback{\textstyle #1}}$}{#1}}
        {\rlap{$\accentadjscript{\scriptstyle #1}
                \accentadjscript{\vrule height1.593pt}
                \overline{\phantom{\scriptstyle #1}
                \accentadjscriptback{\scriptstyle #1}}$}{#1}}
        {\rlap{$\accentadjscriptscript{\scriptscriptstyle #1}
                \accentadjscriptscript{\vrule height1.593pt}
                \overline{\phantom{\scriptscriptstyle #1}
                \accentadjscriptscriptback{\scriptscriptstyle #1}}$}{#1}}}}
\def\itunderline#1{{\mathsurround0pt\mathchoice
        {\rlap{$\underline{\phantom{\displaystyle #1}
                \accentadjtextback{\displaystyle #1}}$}{#1}}
        {\rlap{$\underline{\phantom{\textstyle #1}
                \accentadjtextback{\textstyle #1}}$}{#1}}
        {\rlap{$\underline{\phantom{\scriptstyle #1}
                \accentadjscriptback{\scriptstyle #1}}$}{#1}}
        {\rlap{$\underline{\phantom{\scriptscriptstyle #1}
                \accentadjscriptscriptback{\scriptscriptstyle #1}}$}{#1}}}}
%
%
%
%
%
%
%
%
\newcommand{\limplus}{{\mathchoice{\vcenter{\hbox{$\scriptstyle +$}}}
  {\vcenter{\hbox{$\scriptstyle +$}}}
  {\vcenter{\hbox{$\scriptscriptstyle +$}}}
  {\vcenter{\hbox{$\scriptscriptstyle +$}}}
}}
\newcommand{\limminus}{{\mathchoice{\vcenter{\hbox{$\scriptstyle -$}}}
  {\vcenter{\hbox{$\scriptstyle -$}}}
  {\vcenter{\hbox{$\scriptscriptstyle -$}}}
  {\vcenter{\hbox{$\scriptscriptstyle -$}}}
}}
%
%
%
%
%
%
%
%
\newdimen\extrawidth
\def\iintlim#1#2{\setbox0\hbox{$\scriptstyle#1$}%
	\setbox1\hbox{$\scriptstyle#2$}%
	\extrawidth=\wd1 \advance\extrawidth-\wd0
	\ifdim\extrawidth<0pt \extrawidth=0pt\fi%
	\int_{#1\kern\extrawidth \kern .5em}^{#2\kern -\wd1} \kern -.5em%
}
%
%
\numberwithin{equation}{section}
\newenvironment{ack}{\medskip{\it Acknowledgement.}}{}
\DeclareMathOperator{\Div}{div}
\renewcommand{\phi}{\varphi}
\newcommand{\eps}{\varepsilon}
\newcommand{\lm}{\lambda}
\newcommand{\al}{\alpha}
\newcommand{\de}{\delta}

\newcommand{\R}{\mathbf{R}}
\newcommand{\Rn}{\mathbf{R}^n}
\newcommand{\Rno}{\mathbf{R}^{n+1}}

\newcommand{\Thetat}{\widetilde{\Theta}}
\newcommand{\Thetap}{\Theta'}
\newcommand{\clThetap}{{\overline{\Theta}\mspace{1mu}}'}
\newcommand{\Thetah}{\widehat{\Theta}}
\newcommand{\ut}{\tilde{u}}
\newcommand{\ft}{\tilde{f}}

\newcommand{\p}{{$p\mspace{1mu}$}}

\newcommand{\uP}{\itoverline{H}}
\newcommand{\uPa}{{\itoverline{H}\mspace{1mu}}^a}
\newcommand{\lP}{\itunderline{H}}
\newcommand{\uPind}[1]{\itoverline{H}_{#1}}
\newcommand{\lPind}[1]{\itunderline{H}_{#1}}
\newcommand{\UU}{\mathcal{U}}%
\newcommand{\LL}{\mathcal{L}}%
\newcommand{\bdy}{\partial}
\newcommand{\bdry}{\partial}
\newcommand{\grad}{\nabla}
\newcommand{\setm}{\setminus}
\newcommand{\alp}{\alpha}
\newcommand{\ga}{\gamma}
\newcommand{\la}{\lambda}
\newcommand{\td}{{\tilde{\delta}}}
\newcommand{\be}{\beta}
\begin{document}
%
%
\authortitle{Anders Bj\"orn, Jana Bj\"orn and Ugo Gianazza}
{The Petrovski\u{\i} criterion and barriers
for degenerate and singular \p-parabolic equations}
\author{
Anders Bj\"orn \\
\it\small Department of Mathematics, Link\"oping University, \\
\it\small SE-581 83 Link\"oping, Sweden\/{\rm ;}
\it \small anders.bjorn@liu.se
\\
\\
Jana Bj\"orn \\
\it\small Department of Mathematics, Link\"oping University, \\
\it\small SE-581 83 Link\"oping, Sweden\/{\rm ;}
\it \small jana.bjorn@liu.se
\\
\\
Ugo Gianazza \\
\it\small Department of Mathematics ``F. Casorati'', Universit\`a di Pavia,\\
\it\small via Ferrata 1, 27100 Pavia, Italy\/{\rm ;}
\it\small gianazza@imati.cnr.it
}

\date{}
\maketitle

\noindent{\small
{\bf Abstract}.
In this paper we obtain sharp Petrovski\u\i\ criteria for the \p-parabolic
equation, both in the degenerate case $p>2$ and the singular case $1<p<2$.
We also give an example of an irregular boundary point at which there
is a barrier, thus showing that regularity cannot be characterized
by the existence of just one barrier.
}

\bigskip
\noindent
{\small \emph{Key words and phrases}:
barrier, degenerate parabolic,
Perron's method, Petrovski\u\i\   criterion,
\p-parabolic equation,
regular boundary point,
singular parabolic.
}

\medskip
\noindent
{\small Mathematics Subject Classification (2010):
Primary: 35K61,
Secondary:  35B65, 35K20, 35K65, 35K67, 35K92.
}

\section{Introduction}

In~\cite{Petro2} Petrovski\u\i\ proved the following result.

\medskip

\noindent{\bf Petrovski\u\i's criterion.} 
\emph{The origin $(0,0)$ is regular for the heat equation
$\bdry_t u-\Delta u=0$ in $\R^{n+1}$ 
with respect to the domain
\begin{equation} \label{eq-Pet-p=2}
 \{(x,t) \in \R^{n+1}: 
        |x|<K\sqrt{-t}\sqrt{\log |{\log(-t)}|} \text{ and } -1< t<0\}
\end{equation}
if and only if $K\le2$.}

\medskip

In this paper we obtain similar results for the nonlinear \p-parabolic equation
\begin{equation} \label{eq:para}
\partial_t u- \Delta_p u
    :=\frac{\partial u}{\partial t}- \Div(|\nabla u|^{p-2} \nabla u)
    =0
\end{equation}
both in the \emph{degenerate} case $p>2$ and the \emph{singular} case $1<p<2$.
For $p=2$, \eqref{eq:para}  reduces to the usual heat equation.
(The gradient $\grad u$ and the \emph{\p-Laplacian} $\Delta_p$
are taken with respect to $x\in \R^n$.)

Boundary regularity for the \p-parabolic equation has been studied
by Lindqvist~\cite{lindqvist95}, Kilpel\"ainen--Lindqvist~\cite{KiLi96}
and Bj\"orn--Bj\"orn--Gianazza--Par\-vi\-ain\-en~\cite{BBGP}.
Sufficient Petrovski\u{\i}-type conditions were given in \cite{lindqvist95}
and \cite{BBGP}.
Boundary regularity has also been studied for the 
normalized \p-parabolic equation 
$\partial_t u- |\nabla u|^{2-p}\Delta_p u$=0 by
Banerjee--Garofalo~\cite{BanerjeeGarofalo}.

There are some significant differences in the theory of boundary regularity
for $p \ne 2$ and for the heat equation ($p=2$).
The scaling argument
in Section~\ref{sect-scaling}
shows that for $p \ne 2$ one cannot have a Petrovski\u\i-type
criterion where 
a parameter similar to  $K$ in \eqref{eq-Pet-p=2}
dictates regularity.
Instead we obtain the following result.
See also Remark~\ref{rmk-complete-reg}.

\begin{thm} \label{thm-Petrovskii}
\textup{(Petrovski\u\i-type criteria for $1<p<\infty$)}
Let $K>0$, $q>0$  and 
\[
\Theta=\{(x,t)\in\Rno: |x|<K(-t)^{q} \text{ and }  -1 < t<0\}.
\]
Then the following are true\/\textup{:}
\begin{enumerate}
\item
If $p>2$, then $(0,0)$ is regular if and only if $q>1/p$.
\item 
If $p=2$, then $(0,0)$ is regular if and only if $q \ge 1/2$.
\item 
If $1<p<2$, then $(0,0)$ is regular if $q > 1/p$ and irregular if $q<1/p$.
\end{enumerate}
In all cases regularity is with respect to $\Theta$.
\end{thm}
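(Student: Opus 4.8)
The plan is to work inside the Perron--comparison framework for (super/sub)parabolic functions of \cite{BBGP}, reducing the whole statement to the construction of explicit barriers. First I would invoke the scaling of Section~\ref{sect-scaling}: under the intrinsic parabolic scaling $(x,t)\mapsto(\lambda x,\lambda^{p}t)$ the set $\Theta$ with parameter $K$ is carried to the corresponding set with parameter $K\lambda^{pq-1}$. Hence when $q\ne1/p$ the exponent $pq-1\ne0$ and $K$ may be normalized to any convenient value, so regularity cannot depend on $K$; only the critical exponent $q=1/p$ is scale invariant, which already explains why no Petrovski\u\i\ constant survives for $p\ne2$. This reduces the theorem to deciding regularity for a single normalized $\Theta$ in each regime.

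For the regularity halves ($q>1/p$, and $q\ge\frac12$ when $p=2$) the domain lies, near $(0,0)$, strictly inside the intrinsic parabolic profile $|x|\sim(-t)^{1/p}$, so I would verify that $\Theta$ satisfies the sufficient Petrovski\u\i-type opening condition of \cite{lindqvist95} and \cite{BBGP} and quote the resulting regularity. In the degenerate case one can alternatively exhibit an explicit upper barrier from the self-similar family below (with exponent $a>0$, so that it vanishes at the origin); in the singular case the analogous candidate blows up at the origin, which is why there I would instead rely on the cited sufficient condition. The case $p=2$ is the classical Petrovski\u\i\ criterion and is handled separately by the usual Gaussian computations.

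The heart of the matter is the irregularity half (the sharpness): $q<1/p$ for all $p$, and in addition $q=1/p$ when $p>2$. Here I would seek a self-similar comparison function
$$
u(x,t)=(-t)^{a}\,G\!\left(\frac{|x|}{(-t)^{q}}\right),\qquad
a=\frac{1-pq}{2-p}\quad(p\ne2),
$$
the exponent $a$ being forced by requiring the two terms of $\partial_t u-\Delta_p u$ to carry the same power of $(-t)$. With this choice the equation collapses to an autonomous equation for the profile $G$ in the variable $\eta=|x|(-t)^{-q}$,
$$
|G'|^{p-2}\Big((p-1)G''+\tfrac{n-1}{\eta}G'\Big)=q\eta G'-aG .
$$
I would construct a profile $G\ge0$ with $G(K)=0$ and $G(0)>0$, turning $u$ into a subsolution whose boundary values are nonpositive off the origin. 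The sign of $a$ then dictates the behavior at $(0,0)$: for $p>2$ and $q<1/p$ one has $a<0$, so $u$ blows up at the origin although its boundary data stay bounded, and comparison with the Perron solution forbids attainment of the boundary value; for the critical $q=1/p$ one has $a=0$ and $u=G(\eta)\to G(0)>0$ along $x=0$ while vanishing on the lateral boundary, which again contradicts regularity; for $1<p<2$ and $q<1/p$ the sign of $a$ is reversed, and a dual construction (or a profile singular at $\eta=0$) is used to produce the same detachment.

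The main obstacle is the nonlinear profile analysis together with the rigorous comparison at the irregular corner. Establishing a profile $G$ with the correct sign, the prescribed value at $\eta=0$, and the boundary condition $G(K)=0$ is a genuinely nonlinear ODE problem (the principal part degenerates, or is singular, where $G'=0$), and one must check the differential inequality in the weak sense admissible in \cite{BBGP}. A second delicate point is that, as recorded in the abstract and in Remark~\ref{rmk-complete-reg}, a single barrier does not characterize regularity in this parabolic setting, so irregularity must be read off directly from the comparison rather than from the nonexistence of a barrier. Finally, the asymmetry in the sign of $a$ between $p>2$ and $1<p<2$ is exactly what leaves $q=1/p$ undecided in the singular regime, since there the self-similar profile need not force a nonzero limit at the origin.
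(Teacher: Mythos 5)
Your proposal has genuine gaps, and the most serious one is in the regularity half for $p>2$. You propose to dispose of $q>1/p$ by ``verifying the sufficient Petrovski\u\i-type opening condition of \cite{lindqvist95} and \cite{BBGP}'', but that condition requires $q \ge \frac{1}{p}+\frac{n(p-2)^2}{\lambda p}$, $\lambda=n(p-2)+p$, which is strictly stronger than $q>1/p$: the whole range $\frac1p<q<\frac1p+\frac{n(p-2)^2}{\lambda p}$ was open (Lindqvist even conjectured his condition to be sharp), and closing it is the main content of the paper, namely Theorem~\ref{thm:Petr-deg}, proved by constructing an entire \emph{barrier family} $\{w_C\}$ built on the Barenblatt-type variable $|x|/(-t)^{1/\lambda}$ together with auxiliary functions and a smoothing/monotonicity normalization of the profile. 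Your fallback --- a single ``explicit upper barrier'' from the self-similar family with $a>0$ --- cannot replace this: the equation is not homogeneous, so barriers cannot be scaled or lifted, one bounded supersolution can never fulfil condition \ref{cond-second-weak} of Definition~\ref{def-barrier} for every $k$, and the paper itself shows (Proposition~\ref{prop-one-barrier-intro}) that a single traditional barrier does not imply regularity. So for $p>2$ the ``if'' direction is simply not proved in your plan.

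The irregularity constructions also contain a step that fails. For $p>2$ and $q<1/p$ you want a subsolution $u=(-t)^aG(|x|(-t)^{-q})$ with $a=(1-pq)/(2-p)<0$, vanishing on the lateral boundary and blowing up at the origin; no such function can exist. Its boundary $\limsup$ is bounded at every boundary point with $t<0$, and the origin lies at time $T=0$, hence is excluded from the hypotheses of the comparison principle (Theorem~\ref{thm-parabolic-comparison} with $T=0$); comparison with a constant supersolution then bounds $u$ throughout $\Theta$, contradicting blow-up along the time axis. (One also sees the failure directly in your profile ODE at $\eta=0$: there $-aG(0)>0$ while the degenerate elliptic term vanishes for $p>2$.) The paper avoids this by proving irregularity only for the critical profile $|x|<m(-t)^{1/p}$ (Proposition~\ref{Prop:Petr-deg-irr}) --- where your $a=0$ ansatz does work and is precisely the mirror image of the paper's supersolution $C(|x|^p/(-t))^{1/(p-2)}$ --- and then obtaining all $q<1/p$ by restriction, Proposition~\ref{prop-restrict}, since the domain with $q<1/p$ contains the critical one. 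Likewise, for $1<p<2$ and $q<1/p$ one has $a>0$, so \emph{every} function of your self-similar form tends to $0$ uniformly as $t\to0\limminus$, on the lateral boundary as well as on the time axis; no profile, singular at $\eta=0$ or not, can produce the required detachment, and the paper's irregularity barrier is necessarily non-self-similar: $\bigl(|x|(-t)^{-q}\bigr)^{p/(p-1)}-\frac{n}{1-pq}\bigl(\frac{p}{p-1}\bigr)^{p-1}(-t)^{1-pq}$, a sum of two different time powers, used to cap the lower Perron solution. Your ``dual construction'' is left unspecified, and the natural candidate fails. Finally, note that your intrinsic scaling $(x,t)\mapsto(\lambda x,\lambda^p t)$ cannot normalize $K$ in the critical case $q=1/p$, which is exactly where arbitrary $K$ must be handled for $p>2$; the paper's Proposition~\ref{prop-scaling} is a different, purely spatial scaling, valid for all $q$ when $p\ne2$.
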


For $p=2$ this follows quite directly from the Petrovski\u\i\  criterion
above.
For $p<2$ and $q=1/p$ we do not know whether $(0,0)$ is regular or not,
but the case $p=2$ shows that it is quite possible that $p=2$ 
is a break point for this result and that $(0,0)$ may be regular
when $p<2$ and $q=1/p$.

The Petrovski\u\i-type criterion in 
Lindqvist~\cite[Theorem, p.~571]{lindqvist95}
and Bj\"orn--Bj\"orn--Gianazza--Parviainen~\cite[Theorem~6.1]{BBGP}
gives regularity when 
\[
    p>2 
    \quad \text{and} \quad
    q \ge \frac{1}{p}+\frac{n(p-2)^2}{\la p},
\]
where from now on we use the shorthand $\la=n(p-2)+p$.
It was conjectured in~\cite[p.\ 572]{lindqvist95}
that this would be sharp, 
which is now disproved by Theorem~\ref{thm-Petrovskii}.
For $p<2$ and $q>1/p$ regularity follows from 
Proposition~7.1 in \cite{BBGP} 
(and 
\cite[Proposition~3.4]{BBGP} when $0 < K \le 1$).

In Kilpel\"ainen--Lindqvist~\cite[pp.\ 676--677]{KiLi96}
 it was shown that $(0,0)$ is an irregular boundary point with 
respect to the so-called Barenblatt balls when $p>2$, i.e.\
for $q=1/\la<1/p$, with $K$ dependent on $p$.
Lindqvist~\cite[footnote p.\ 572]{lindqvist95} 
also states 
that ``it is not too
difficult to show'' irregularity for $q =1/p$ when $p>2$. 
Theorem~\ref{thm-Petrovskii} extends these results and completes the picture.
As a matter of fact, 
for $p>2$ we provide more powerful criteria in Theorem~\ref{thm:Petr-deg}
and Proposition~\ref{Prop:Petr-deg-irr}.
As we do not know what happens when $p<2$ and $q=1/p$, we
have refrained from giving such criteria when $p<2$.

We are also interested in barrier characterizations.
Already Kil\-pe\-l\"ai\-nen--Lind\-qvist~\cite{KiLi96} suggested
that regularity can be characterized using 
one (traditional) 
barrier.
Such a criterion
turned out to be problematic, 
and it
has been an open problem since then 
whether a single (traditional)
barrier guarantees regularity.
A criterion using a family of barriers was obtained 
in~\cite[Theorem~3.3]{BBGP}.
In this paper we prove
the following result.

\begin{prop} \label{prop-one-barrier-intro}
Let $1<p<2$, $K>0$ and $0<q<1/p$.
Then there is a traditional barrier at $(0,0)$ for the domain
\[
\Theta=\{(x,t)\in\Rno: |x|<K(-t)^{q} \text{ and }  -1 < t<0\}
\]
despite the fact that $(0,0)$ is irregular.
\end{prop}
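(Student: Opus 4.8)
The plan is to exhibit a single positive superparabolic function $w$ on $\Theta$ with $\lim_{(x,t)\to(0,0)}w(x,t)=0$ and $\liminf w>0$ at every other (parabolic) boundary point; such a $w$ is by definition a traditional barrier at $(0,0)$. The irregularity of $(0,0)$ need not be reproved, as it is the irregular half of Theorem~\ref{thm-Petrovskii}(c), valid because $0<q<1/p$. Thus the entire content of the statement lies in constructing the barrier.

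First I would look for $w$ in self-similar form adapted to the opening rate of the cusp,
\[
w(x,t)=(-t)^{a}\,g(s),\qquad s:=\frac{|x|}{(-t)^{q}}\in[0,K),
\]
and fix $a$ so that the two parts of $\bdy_t w-\Delta_p w$ carry the same power of $(-t)$. A direct computation gives $\bdy_t w=(-t)^{a-1}(qsg'-ag)$ and $\Delta_p w=(-t)^{(a-q)(p-1)-q}\bigl((n-1)s^{-1}G+G'\bigr)$, where $G=|g'|^{p-2}g'$, so the exponents coincide exactly when $a(2-p)=1-qp$, that is
\[
a=\frac{1-qp}{2-p}>0 .
\]
Positivity of $a$ already yields $w\to0$ at $(0,0)$ (since $g$ is bounded on $[0,K)$) and $\liminf w>0$ at the remaining boundary points, where $(-t)$ stays bounded away from $0$. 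The supersolution condition then reduces to the ordinary differential inequality
\[
qsg'-ag-(n-1)s^{-1}|g'|^{p-2}g'-(p-1)|g'|^{p-2}g''\ge0,\qquad s\in(0,K),
\]
to be solved by some $g>0$ on $[0,K]$.

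The structural point is that a smooth radial profile with $g'(0)=0$ cannot succeed: at the axis the inequality collapses to $-ag(0)\ge0$, which fails since $a,g(0)>0$ (this is just the fact that $(-t)^a$ is a strict subsolution). I would therefore give $g$ a downward conical ridge at the axis. For $n\ge2$ the simplest choice is the Lipschitz profile $g(s)=g_0-cs$, so that $w=(-t)^a\bigl(g_0-c|x|(-t)^{-q}\bigr)$ attains a spatial maximum ridge along $\{x=0\}$. Across that ridge the distributional $p$-Laplacian of the linear-in-$|x|$ term is a non-positive measure (the model identity being $\Delta_p(-|x|)\le0$), so $-\Delta_p w$ has a non-negative singular part and $w$ is superparabolic across the axis. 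Off the axis $w$ is smooth and the displayed inequality becomes $c(a-q)s-ag_0+(n-1)c^{p-1}s^{-1}\ge0$; for $c$ small this left-hand side is decreasing in $s$, so it suffices to make it non-negative at $s=K$, i.e.\ to pick $g_0$ in the window $cK<g_0\le\bigl(c(a-q)K+(n-1)c^{p-1}K^{-1}\bigr)/a$, which is non-empty as soon as $c^{2-p}<(n-1)/(qK^2)$, holding for $c$ small since $2-p>0$. For $n=1$ the term $(n-1)s^{-1}$ disappears, so instead I would take a strictly concave $g$ whose slope blows up at the axis and let $(p-1)|g'|^{p-2}g''=\tfrac{d}{ds}\bigl(|g'|^{p-2}g'\bigr)$ absorb the bad term $-ag_0$ near $s=0$.

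I expect the main obstacle to be the rigorous proof that this continuous, only piecewise-classical $w$ is superparabolic in the weak sense of the theory, not merely a classical supersolution off the axis. The delicate locus is $\{x=0\}$, where $\grad w$ is undefined and where, for the singular exponent $1<p<2$, the operator is most subtle; the argument must confirm that the favourable sign of the ridge persists in the weak formulation, i.e.\ that on testing against non-negative $\phi$ and integrating by parts the singular part of $-\Delta_p w$ along the axis is non-negative, with no hidden negative contribution. Once this is settled, positivity of $g$ on $[0,K]$ and the limit $w\to0$ at $(0,0)$ give the barrier, and juxtaposing it with Theorem~\ref{thm-Petrovskii}(c) produces the claimed phenomenon.
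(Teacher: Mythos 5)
Your framework is right (irregularity quoted from Theorem~\ref{thm-Petrovskii}, so everything rides on exhibiting one positive superparabolic function vanishing only at the origin), and your off-axis algebra for $n\ge2$ checks out: the window $cK<g_0\le\bigl(c(a-q)K+(n-1)c^{p-1}K^{-1}\bigr)/a$ is indeed nonempty once $c^{2-p}<(n-1)/qK^2$. But two steps are genuinely flawed. First, your structural claim --- that a profile with $g'(0)=0$ must fail because ``at the axis the inequality collapses to $-ag(0)\ge0$'' --- is false in the singular range $1<p<2$. Since $p-2<0$, the factor $|g'|^{p-2}$ blows up as $g'\to0$, so at a nondegenerate smooth maximum both $-(n-1)s^{-1}|g'|^{p-2}g'$ and $-(p-1)|g'|^{p-2}g''$ tend to $+\infty$ near the axis: $\Delta_p w$ does not tend to $0$ there, and the pointwise value on the measure-zero set $\{x=0\}$ is irrelevant for the weak formulation. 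This error matters because it is what drives you to the conical ridge, which creates exactly the axis singularity you then flag as ``the main obstacle'' and leave unresolved (for $n\ge2$ that verification is in fact routine, since the flux $|\nabla w|^{p-2}\nabla w$ is bounded and the axis has codimension $n\ge2$, so there is no singular part at all --- but you do not carry it out, nor do you invoke the step from continuous weak supersolution to \p-superparabolicity).

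Second, and more seriously, the $n=1$ case fails as written. A concave $g$ with a ridge at the axis cannot have blowing-up slope there: concavity makes $g'$ nonincreasing, so $g'(0^+)=-\infty$ would force $g'\equiv-\infty$, while $g'(0^+)=+\infty$ puts a downward cusp at the axis, and the model identity there is $\Delta_p(|x|)=2\delta_0\ge0$ --- a singular part of the \emph{wrong} sign, i.e.\ a subsolution-type singularity. The correct repair is precisely what your structural claim forbids: a concave profile with $g'(0)=0$, say $g(s)=g_0-cs^\gamma$ with $1<\gamma\le p/(p-1)$. The choice $\gamma=p/(p-1)$ is cleanest, because then $|g'|^{p-2}g'$ is linear in $s$, so $w$ is a classical supersolution in all of $\Theta$ for every $n\ge1$ and the axis issue disappears. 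This is essentially the paper's route: it takes the separated supersolution $v(x,t)=(-t)^{1/(2-p)}\bigl(B-|x|^{p/(p-1)}\bigr)$ with $B=\min\bigl\{n(2-p)(p/(p-1))^{p-1},1\bigr\}$, for which $\Delta_p v$ is constant in $x$ by Lemma~\ref{lem-Delta-powers}, and then pastes $\min\{v,M\}$ with a suitable constant $M$ (pasting lemma of \cite{BBGP}) to get positivity and the barrier behaviour at the remaining boundary points. So your plan can be completed for $n\ge2$, but as written it breaks down for $n=1$ and rests on a dichotomy that the paper's own barrier contradicts.
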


This shows that regularity cannot be characterized using only one barrier, at least
not for $p<2$. We conjecture
that this is true also for $p>2$, but we have not
been able to find a counterexample in the degenerate range.

We end this introduction by mentioning that quite a lot
of attention has been given to the study of nonlinear parabolic
problems in the last 20--30 years, in particular for 
the \p-parabolic equation as here.
See, for example, 
B\"ogelein--Duzaar--Mingione~\cite{BDM},
DiBenedetto~\cite{dibe-mono},
DiBenedetto--Gianazza--Vespri~\cite{DBGV-mono}, 
Kuusi--Mingione~\cite{KuusiMing}
and Bj\"orn--Bj\"orn--Gianazza--Parviainen~\cite{BBGP}
for the recent history and many more references to the current literature.

\begin{ack}
The  first two authors were supported by the Swedish Research Council.
Part of this research was done while J.~B. visited
Universit\`a di Pavia in 2014,
and the paper was completed while U.~G. visited
Link\"oping University in 2015.
They want to thank these departments for the hospitality.
\end{ack}

\section{Preliminaries}

We will use the notation and several 
results from 
Bj\"orn--Bj\"orn--Gianazza--Par\-vi\-ain\-en~\cite{BBGP}.
Here we will be brief and only introduce and discuss what we really need,
see \cite{BBGP} for a more extensive discussion.

\medskip

\emph{From now on we will always assume that 
$\Theta\subset\R^{n+1}$ is a nonempty bounded open set
and $1<p< \infty$.}

\medskip

 Let $U$ be an
open set in $\Rn$.  The \emph{parabolic boundary} of the cylinder
$U_{t_1,t_2}:=U\times (t_1,t_2)\subset \R^{n+1}$ is
\[
\partial_p U_{t_1,t_2}=(\overline U\times \{t_1\})\cup(\partial U\times (t_1,t_2]).
\]

By the \emph{parabolic Sobolev space} $L^p(t_1,t_2;W^{1,p}(U))$,
with $t_1<t_2$, we mean the space of functions $u(x,t)$ such that the mapping $x
\mapsto u(x,t)$ belongs to $W^{1,p}(U)$ for almost every $t_1 < t <
t_2$ and the norm
\[
\biggl(\iintlim{t_1}{t_2}\int_U (|u(x,t)|^{p} + |\nabla
u(x,t)|^{p})\,dx\,dt\biggr)^{1/p}
\]
is finite. The definition of the space
$L^p(t_1,t_2;W_{0}^{1,p}(U))$ is similar.
Analogously, by the space $C([t_1,t_2];L^p(U))$,
with $t_1<t_2$, we mean the space of functions $u(x,t)$, such that the mapping
$t\mapsto\int_U|u(x,t)|^p\,dx$ is continuous in the time interval $[t_1,t_2]$.
(The gradient $\nabla$ and divergence $\Div$ are always taken with
respect to the $x$-variables in this paper.)
We can now introduce the notion of weak solution.

\begin{definition}
A function $u:\Theta \to [-\infty,\infty]$ is a  
\emph{weak solution} to equation \eqref{eq:para} 
if whenever $U_{t_1,t_2} \Subset \Theta$ is an open cylinder, 
we have 
$u \in C([t_1,t_2];L^2(U))\cap L^{p}(t_1,t_2;W^{1,p}(U))$, and  
 $u$ satisfies the integral equality
\[ 
\iintlim{t_1}{t_2}\int_{U} |\nabla u|^{p-2} \nabla u \cdot
\nabla\phi \, dx\,dt - \iintlim{t_1}{t_2}\int_{U} u
\frac{\partial\phi}{\partial t} \, dx\,dt   =  0
\quad \text{for all }\phi \in C_0^\infty(U_{t_1,t_2}).
\] 
A \emph{\p-parabolic function} is a continuous weak solution.

A function $u$
is a  \emph{weak supersolution}
if whenever $U_{t_1,t_2} \Subset
\Theta$ we have $u \in
L^{p}(t_1,t_2;W^{1,p}(U))$ and the 
left-hand side 
above is nonnegative for all
nonnegative $\phi \in C_0^\infty(U_{t_1,t_2})$.
For simplicity, we will 
omit \emph{weak}, when talking of weak supersolutions. 
\end{definition}

The most important \p-parabolic function is the \emph{Barenblatt solution} 
\cite{Bare52} 
${\mathcal B_p}:{\mathbf R}^n\times(0,\infty)\to[0,\infty)$ defined by
\[
\mathcal B_p(x,t)=t^{-n/\lambda}\biggl(C-\frac{p-2}{p}\lambda^{1/(1-p)}
\biggl(\frac{|x|}{t^{1/\lambda}}\biggr)^{p/(p-1)}\biggr)_\limplus^{(p-1)/(p-2)},\quad\lambda=n(p-2)+p,
\]
where $C>0$ is an arbitrary constant.
Even though it was introduced in the context of degenerate equations for $p>2$,
it is well defined also for $p<2$, provided that $\lambda>0$,
i.e.\ that $2n/(n+1)<p<2$.
We will not directly use the Barenblatt solution in this paper, but 
some of our expressions are closely related to the
Barenblatt solution.

\begin{definition}\label{def:superparabolic}
A function $u:\Theta\rightarrow (-\infty,\infty]$
is \emph{\p-super\-parabolic} if
\begin{enumerate}
\renewcommand{\theenumi}{\textup{(\roman{enumi})}}%
\item $u$ is lower semicontinuous;
\item $u$ is finite in a dense subset of ${\Theta}$;
\item $u$ satisfies the following comparison principle on each space-time
  box $Q_{t_1,t_2}\Subset{\Theta}$: If $h$ is \p-parabolic in
  $Q_{t_1,t_2}$ and continuous on $\overline{Q}_{t_1,t_2}$, and if
  $h\leq u$ on $\partial_p Q_{t_1,t_2}$, then $h\leq u$ in the whole $Q_{t_1,t_2}$.
\end{enumerate}

A function $v:\Theta\rightarrow [-\infty,\infty)$
is \emph{\p-sub\-parabolic} if $-u$ is \p-superparabolic.
\end{definition}

Here $Q_{t_1,t_2}$ is a \emph{space-time box} if it is of the form
$ Q_{t_1,t_2}=Q\times(t_1,t_2)$, where $Q=(a_1,b_1)\times\ldots\times(a_n,b_n)$.

The connection between \p-superparabolic functions and supersolutions 
is a delicate issue.
However, a continuous supersolution is \p-superparabolic
by the comparison principle 
of Korte--Kuusi--Parviainen~\cite[Lemma~3.5]{KoKuPa10}.

We will need the following parabolic comparison principle.

\begin{theo} \label{thm-parabolic-comparison}
\textup{(Parabolic comparison principle, \cite[Theorem~2.4]{BBGP})}
Suppose that $u$ is \p-super\-parabolic and $v$ is \p-sub\-parabolic
in $\Theta$.
Let $T \in \R$ 
and assume that
 \[
  \infty \ne    \limsup_{\Theta \ni (y,s)\rightarrow (x,t)} v(y,s)\leq
   \liminf_{\Theta \ni (y,s)\rightarrow (x,t)} u(y,s) \ne -\infty
 \]
for all
$(x,t) \in\{(x,t) \in \partial\Theta : t< T\}$.
Then $v\leq u$ in $\{(x,t) \in \Theta : t<T\}$.
\end{theo}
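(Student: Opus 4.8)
The plan is to deduce the global inequality from a local \emph{parabolic maximum principle} on space-time boxes, which I extract from Definition~\ref{def:superparabolic}(iii) by squeezing a $p$-parabolic function between $v$ and $u$. Write $w=v-u$ on $\Theta_T:=\{(x,t)\in\Theta: t<T\}$. Since $u$ is lower semicontinuous and $v$ is upper semicontinuous (as $-v$ is $p$-superparabolic), $w$ is upper semicontinuous with values in $[-\infty,\infty)$. The boundary hypothesis gives, at every $(x,t)\in\partial\Theta$ with $t<T$,
\[
  \limsup_{\Theta\ni(y,s)\to(x,t)} w(y,s)\le \limsup_{\Theta\ni(y,s)\to(x,t)} v(y,s)-\liminf_{\Theta\ni(y,s)\to(x,t)} u(y,s)\le 0,
\]
the assumptions $\ne\pm\infty$ ruling out $\infty-\infty$. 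Hence for each $c>0$ there is a boundary collar on which $w<c$, so the superlevel set $F_c:=\{w\ge c\}$ is closed in $\Theta_T$; being bounded ($\Theta$ is bounded) and bounded away from $\partial\Theta$ at times $\le t_2$ for any $t_2<T$, its low-time part is compactly contained in $\Theta$.

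First I would establish the engine: for every box $B=Q_{t_1,t_2}\Subset\Theta$ one has $\sup_B w\le \max_{\partial_p B} w$. Indeed, fix $c'>\max_{\partial_p B}w$ (the maximum is attained, $\partial_p B$ being compact and $w$ upper semicontinuous), so $v-c'<u$ on $\partial_p B$. Here $v-c'$ is $p$-subparabolic and upper semicontinuous, $u$ is lower semicontinuous, and $u$ attains a finite minimum and $v-c'$ a finite maximum on the compact $\partial_p B$; a standard insertion theorem then yields a continuous $g$ with $v-c'\le g\le u$ there. Using that boxes are regular for the $p$-parabolic Dirichlet problem, let $h$ be the $p$-parabolic solution with boundary values $g$ on $\partial_p B$, continuous on $\overline B$. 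Definition~\ref{def:superparabolic}(iii), applied to the $p$-superparabolic $u$ and the $p$-parabolic $h$ (using $h\le u$ on $\partial_p B$), and then to the $p$-superparabolic $-v+c'$ and the $p$-parabolic $-h$ (using $-h\le-v+c'$ on $\partial_p B$), gives $h\le u$ and $v-c'\le h$ in $B$. Thus $w\le c'$ throughout $B$, and letting $c'\downarrow\max_{\partial_p B}w$ proves the claim.

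The heart of the proof, and the step I expect to be the main obstacle, is propagating this box principle to the non-cylindrical set $\Theta_T$: the lateral boundary of an interior box need not lie on $\partial\Theta$, so $w$ is not controlled there a priori. I would run a first-violation-in-time argument for each fixed $c>0$. Assuming $F_c\ne\emptyset$, set $t^*=\inf\{t:(x,t)\in F_c\}$; the collar and compactness force the infimum to be attained at some $(x^*,t^*)$, and $w<c$ for all $t<t^*$. The slice $F_c\cap\{t=t^*\}$ is compact in $\Theta$, so by the tube lemma there is a spatial box $Q\supset F_c\cap\{t=t^*\}$ with $\overline Q\times(t^*-\eps,t^*+\eps)\subset\Theta$. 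The crucial point is that violations cannot spread laterally instantaneously: by closedness of $F_c$ and upper semicontinuity of $w$, for this neighbourhood there is $\eps'\in(0,\eps)$, with $t^*+\eps'<T$, such that every point of $F_c$ with $t\in(t^*,t^*+\eps']$ already has its spatial coordinate in $Q$. Choosing $t_1\in(t^*-\eps,t^*)$ and $t_2=t^*+\eps'$, the box $B=Q_{t_1,t_2}\Subset\Theta$ then satisfies $w<c$ on all of $\partial_p B$: the bottom lies below $t^*$, while on the lateral part $\partial Q$ is disjoint from $Q$ and hence avoids every violation at times in $[t^*,t_2]$.

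It remains to cash this in. Since $\partial_p B$ is compact and $w$ is upper semicontinuous with $w<c$ there, $\max_{\partial_p B}w<c$ strictly, so the box maximum principle gives $\sup_B w<c$; this contradicts $w(x^*,t^*)\ge c$ with $(x^*,t^*)\in B$. Therefore $F_c=\emptyset$ for every $c>0$, that is $w\le0$, and so $v\le u$ on $\Theta_T$, as required. The $\pm\infty$ bookkeeping is routine and local: the argument is run one level $c$ at a time, so no global boundedness of $w$ is needed, and the finiteness built into the hypothesis is exactly what makes the boundary estimate and the insertion step legitimate.
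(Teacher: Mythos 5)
You should first note that the paper itself contains no proof of this theorem: it is quoted verbatim from \cite{BBGP} (Theorem~2.4 there), so your attempt can only be compared with the literature, not with an argument in the text. That said, much of your scaffolding is sound: the boundary collar, the compactness of $F_c\cap\{t\le t_2\}$ for $t_2<T$, the attainment of the first violation time $t^*$, and your ``no instantaneous lateral spreading'' lemma are all correct (the last one by exactly the compactness argument you sketch). The box engine is also essentially right, provided you cite rather than gesture at two external ingredients: a Hahn/Kat\v{e}tov-type insertion theorem (after truncating $v-c'$ and $u$ to make them real-valued), and the solvability of the Dirichlet problem on a space-time box with continuous data on the parabolic boundary, with solution continuous on the \emph{closed} box; Definition~\ref{def:superparabolic}(iii) by itself supplies no $p$-parabolic functions to compare against.

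The genuine gap is the sentence asserting that ``by the tube lemma there is a spatial box $Q\supset F_c\cap\{t=t^*\}$ with $\overline Q\times(t^*-\eps,t^*+\eps)\subset\Theta$.'' The tube lemma yields an open neighbourhood of the slice, \emph{not} a box. A space-time box in the sense of the paper has $Q=(a_1,b_1)\times\ldots\times(a_n,b_n)$, so any admissible $Q$ containing the slice must contain its axis-parallel bounding box, and that set need not lie in $\Theta$: if the time-$t^*$ section of $\Theta$ is an annulus and the violation slice winds around the hole, no such $Q$ exists at all; even for the rotationally symmetric domains of this paper, two violation points near the boundary of the ball $\{|x|<r\}$ in diagonal position, say near $(r-\delta)e_1$ and $(r-\delta)e_2$, force a bounding box whose corner has norm at least $\sqrt2\,(r-\delta)>r$. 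Nothing in your argument rules such configurations out, so the burden is on the proof to handle them. Nor is this a removable technicality: if you replace the single box by a finite cover of boxes, the lateral boundary $\partial Q_i\times(t_1,t_2]$ of one box can contain violation points (at times $\ge t^*$) lying inside a neighbouring box, so the box maximum principle applies to no box of the cover, and in the annulus scenario any admissible box meeting a connected winding slice necessarily has lateral boundary meeting it too, so no choice of cover chains. This geometric obstruction is precisely what makes the theorem nontrivial for Perron-class functions defined through comparison on boxes; the proofs in \cite{KiLi96} and \cite{BBGP} circumvent it with different machinery (in particular the link between superparabolic functions and weak supersolutions as in \cite{KoKuPa10}, for which comparison on general compactly contained sets is available by variational arguments), not by a first-violation-time argument confined to a single box. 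As it stands, your proof is incomplete at the step you yourself identify as its heart.
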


We now turn to the Perron method.
For us it will be enough to consider Perron solutions for
bounded functions, so for simplicity we restrict ourselves
to this case.

\begin{definition}   \label{def-Perron}
Given a bounded function $f \colon \bdy \Theta \to \R$,
let the upper class $\UU_f(\Theta)$ be the set of all
\p-superparabolic  functions $u$ on $\Theta$ which are
bounded below 
and such that
\[
    \liminf_{\Theta \ni \eta \to \xi} u(\eta) \ge f(\xi) \quad \text{for all }
    \xi \in \bdy \Theta.
\]
Define the \emph{upper Perron solution} of $f$  by
\[
    \uPind{\Theta} f (\xi) = \inf_{u \in \UU_f(\Theta)}  u(\xi), \quad \xi \in \Theta.
\]
Similarly, let the lower class 
$\LL_f(\Theta)$ be the set of all
\p-subparabolic functions $u$ on $\Theta$ which are
bounded above and such that
\[
\limsup_{\Theta \ni \eta \to \xi} u(\eta) \le f(\xi) \quad \text{for all }
\xi \in \bdy \Theta,
\]
and define the \emph{lower Perron solution}  of $f$ by
\[
    \lPind{\Theta} f (\xi) = \sup_{u \in \LL_f(\Theta)}  u(\xi), \quad \xi \in \Theta.
\]
\end{definition}

If the domain under consideration is clear 
from the context,
we will often drop $\Theta$ from the
notation above.
It follows from the 
parabolic comparison principle (Theorem~\ref{thm-parabolic-comparison})
that $\lP f \le \uP f$.
Moreover $\lP f = - \uP (-f)$.
Kilpel\"ainen--Lindqvist~\cite[Theorem~5.1]{KiLi96}
showed that both $\lP f$ and $\uP f$
are \p-parabolic.

The following simple lemma is easily proved by direct calculation.

\begin{lem}   \label{lem-Delta-powers}
For any $\al, C\in\R$ we have
\[
\Delta_p (C|x|^\al) = C\al |C\al|^{p-2} (n+(\al-1)(p-1)-1) |x|^{(\al-1)(p-1)-1}.
\]
In particular, if $\al=p/(p-2)$ 
and $C\al>0$ then 
\[
\Delta_p (C|x|^\al) = (C\al)^{p-1} (n+\al)|x|^\al 
= \frac{(C\al)^{p-1} \la}{p-2}|x|^\al,
\]
and if $\al=p/(p-1)$ 
and $C>0$ then $\Delta_p (C|x|^\al) = (C\al)^{p-1}n$.
\end{lem}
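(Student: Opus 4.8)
The plan is to compute everything directly, exploiting the fact that $u=C|x|^\alpha$ is radial and that $\Delta_p$ acts only in the $x$-variables, so the whole computation takes place pointwise on $\{x\ne0\}$, where the expression is smooth. First I would differentiate: from $\grad|x|^\alpha=\alpha|x|^{\alpha-2}x$ we get $\grad u=C\alpha|x|^{\alpha-2}x$, and hence $|\grad u|=|C\alpha|\,|x|^{\alpha-1}$. Raising this to the power $p-2$ and multiplying by $\grad u$ produces the vector field
\[
|\grad u|^{p-2}\grad u=C\alpha|C\alpha|^{p-2}|x|^{\beta}x,
\qquad \beta:=\alpha(p-1)-p,
\]
where the $|x|$-exponent is assembled from $(\alpha-1)(p-2)+(\alpha-2)=\alpha(p-1)-p$.

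Next I would take the divergence using the elementary identity $\Div(|x|^\beta x)=(n+\beta)|x|^\beta$, which in turn follows from $\Div x=n$ together with $\grad|x|^\beta\cdot x=\beta|x|^{\beta-2}|x|^2=\beta|x|^\beta$. This gives
\[
\Delta_p u=C\alpha|C\alpha|^{p-2}(n+\beta)|x|^\beta,
\]
and since $\beta=\alpha(p-1)-p=(\alpha-1)(p-1)-1$, this is exactly the general formula in the statement.

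For the two special cases I would first observe that the hypothesis $C\alpha>0$ makes $|C\alpha|=C\alpha$, so the sign factor collapses: $C\alpha|C\alpha|^{p-2}=(C\alpha)^{p-1}$. When $\alpha=p/(p-2)$ a one-line computation yields $\beta=\tfrac{p}{p-2}(p-1)-p=\tfrac{p}{p-2}=\alpha$, and $n+\alpha=(n(p-2)+p)/(p-2)=\la/(p-2)$, giving both displayed forms. When $\alpha=p/(p-1)$ (note $\alpha>0$ here, so $C>0$ already forces $C\alpha>0$) one gets $\beta=\tfrac{p}{p-1}(p-1)-p=0$, so $|x|^\beta\equiv1$ and the factor $n+\beta$ reduces to $n$, leaving $\Delta_p(C|x|^\alpha)=(C\alpha)^{p-1}n$.

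There is no genuine analytic obstacle: the only point requiring care is the bookkeeping of signs, namely keeping the factor $|C\alpha|^{p-2}$ separate from $C\alpha$ in the general identity (since $\alpha$ and $C$ may each have either sign there), and then verifying the two algebraic simplifications of $\beta$ under the respective choices of $\alpha$. All manipulations are valid for $x\ne0$, where $C|x|^\alpha$ is smooth and the \p-Laplacian is classically defined.
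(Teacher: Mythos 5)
Your computation is correct and is precisely the direct calculation the paper has in mind (the paper gives no explicit proof, stating only that the lemma ``is easily proved by direct calculation''). The gradient formula, the identity $\Div(|x|^\beta x)=(n+\beta)|x|^\beta$ with $\beta=\alpha(p-1)-p=(\alpha-1)(p-1)-1$, the sign bookkeeping via $|C\alpha|^{p-2}$, and the two algebraic specializations of $\beta$ all check out, so nothing further is needed.
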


\section{Boundary regularity}\label{S:Boundary}

\begin{definition}
\label{def:regular}
A boundary point $\xi_0\in \partial \Theta$ is 
\emph{regular} with respect to $\Theta$, if
\[
        \lim_{\Theta \ni \xi \to \xi_0} \uP f(\xi)=f(\xi_0)
\]
whenever $f: \partial \Theta \to \R$ is continuous.
\end{definition}
 
Observe that since $\lP f = - \uP (-f)$, regularity can equivalently
be formulated using lower Perron solutions.

\begin{definition}
\label{def-barrier} 
Let $\xi_0\in \partial \Theta$. A family of functions 
$w_j: \Theta\to (0,\infty]$, $j=1,2,\ldots$, 
 is a \emph{barrier family} in $\Theta$ at the point  $\xi_0$ if
for each $j$, 
\begin{enumerate}
 \item\label{cond-first}  $w_j$ is a  positive \p-superparabolic function in $\Theta$;
 \item\label{cond-third} $\lim_{{\Theta \ni} \zeta\to \xi_0} w_j(\zeta)=0$;
 \item\label{cond-second-weak} 
for each $k=1,2,\ldots$, there is a $j$ such that 
\[
   \liminf_{\Theta \ni \zeta\to\xi} w_j(\zeta) \ge k
   \quad \text{for all } \xi \in \bdy \Theta 
   \text{ with }|\xi-\xi_0| \ge 1/k.
\]
\setcounter{saveenumi}{\value{enumi}}
\end{enumerate}
\bigskip
%

We also say that the family $w_j$ 
is a  \emph{strong barrier family} in $\Theta$ at the point  $\xi_0$ if,
in addition, the following conditions hold:
\begin{enumerate}
\setcounter{enumi}{\value{saveenumi}}
\item\label{cond-cont}  $w_j$ is continuous in $\Theta$;
\item\label{cond-second-strong} 
there is a nonnegative function $d \in C(\overline{\Theta})$, with
$d(z)=0$ if and only if $z=\xi_0$,
such that 
for each $k=1,2,\ldots$, there is a $j=j(k)$ such that 
$w_j \ge k d$ in $\Theta$.
\end{enumerate}
\end{definition}

\begin{thm}\label{thm:barrier-char}
\textup{(\cite[Theorem~3.3]{BBGP})} 
Let $\xi_0 \in \bdy \Theta$. Then the following are equivalent:
\begin{enumerate}
\renewcommand{\theenumi}{\textup{(\arabic{enumi})}}%
\item \label{i-reg}
$\xi_0$ is regular\/\textup{;}
\item \label{i-weak}
there is a barrier family at $\xi_0$\/\textup{;}
\item \label{i-strong}
there is a strong barrier family at $\xi_0$.
\end{enumerate}
\end{thm}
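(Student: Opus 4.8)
\noindent
The plan is to establish the cycle (1) $\Rightarrow$ (3) $\Rightarrow$ (2) $\Rightarrow$ (1). Of these, (3) $\Rightarrow$ (2) is immediate from the definitions: a strong barrier family satisfies conditions \ref{cond-first}--\ref{cond-second-weak} by definition and is therefore a barrier family.

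\emph{The implication (2) $\Rightarrow$ (1).} Let $\{w_j\}$ be a barrier family at $\xi_0$ and let $f\in C(\bdy\Theta)$. Since constants are \p-parabolic, replacing $f$ by $f-f(\xi_0)$ merely shifts $\uPind{\Theta}f$ by $f(\xi_0)$, so we may assume $f(\xi_0)=0$; put $M=\sup_{\bdy\Theta}|f|$. Fix $\eps>0$ and, by continuity of $f$, choose $\delta>0$ with $|f|<\eps$ on $\{|\zeta-\xi_0|<\delta\}$. Pick an integer $k\ge M$ with $1/k<\delta$, and let $w_j$ be the family member provided by \ref{cond-second-weak} for this $k$. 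Adding a constant to a \p-superparabolic function preserves \p-superparabolicity (a \p-parabolic comparison function satisfies $h\le\eps+w_j$ iff $h-\eps\le w_j$, and $h-\eps$ is again \p-parabolic), so $u:=\eps+w_j$ is \p-superparabolic and bounded below by $\eps$. For $\xi\in\bdy\Theta$ with $|\xi-\xi_0|\ge1/k$ we get $\liminf_{\eta\to\xi}u(\eta)\ge\eps+k\ge M\ge f(\xi)$, while for $|\xi-\xi_0|<1/k<\delta$ we get $\liminf_{\eta\to\xi}u(\eta)\ge\eps>f(\xi)$. Hence $u\in\UU_f(\Theta)$ and $\uPind{\Theta}f\le\eps+w_j$ in $\Theta$. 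Letting $\xi\to\xi_0$ and using \ref{cond-third} gives $\limsup_{\xi\to\xi_0}\uPind{\Theta}f(\xi)\le\eps$, so $\limsup_{\xi\to\xi_0}\uPind{\Theta}f\le0=f(\xi_0)$. Applying this to $-f$ and using $\lPind{\Theta}f=-\uPind{\Theta}(-f)\le\uPind{\Theta}f$ yields $\liminf_{\xi\to\xi_0}\uPind{\Theta}f\ge f(\xi_0)$, whence $\uPind{\Theta}f\to f(\xi_0)$ and $\xi_0$ is regular. Note that $w_j$ is never rescaled; the growing thresholds come entirely from \ref{cond-second-weak}.

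\emph{The implication (1) $\Rightarrow$ (3).} Suppose $\xi_0$ is regular. Fix once and for all $d\in C(\overline{\Theta})$ with $d\ge0$ and $d(z)=0$ if and only if $z=\xi_0$ (for instance $d(z)=\min\{1,|z-\xi_0|\}$); since $\xi_0\in\bdy\Theta$ we have $d>0$ throughout $\Theta$. I would take $w_j:=\uPind{\Theta}f_j$ for continuous boundary data $f_j$ with $f_j(\xi_0)=0$ and $f_j$ large away from $\xi_0$. Each such $w_j$ is \p-parabolic by \cite[Theorem~5.1]{KiLi96}, hence continuous (giving \ref{cond-cont}) and, being a continuous weak solution, \p-superparabolic; moreover regularity of $\xi_0$ forces $\lim_{\zeta\to\xi_0}w_j(\zeta)=f_j(\xi_0)=0$, which is \ref{cond-third}. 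Everything then reduces to the interior lower bound $w_j\ge j\,d$ in $\Theta$, i.e.\ condition \ref{cond-second-strong}: indeed $d>0$ in $\Theta$ makes this bound yield positivity \ref{cond-first}, and it yields \ref{cond-second-weak} for free, since applying \ref{cond-second-strong} with $k$ replaced by $k^2$ gives, for $|\xi-\xi_0|\ge1/k$, the estimate $\liminf_{\zeta\to\xi}w_{k^2}(\zeta)\ge k^2 d(\xi)\ge k^2\cdot(1/k)=k$ by continuity of $d$.

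\emph{Main obstacle.} The hard part is producing the minorants. To get $w_j\ge j\,d$ one compares $w_j=\uPind{\Theta}f_j$ from below with an explicit \p-subparabolic $v_j\in\LL_{f_j}(\Theta)$ satisfying $v_j\ge j\,d$ in $\Theta$ (so that $w_j\ge\lPind{\Theta}f_j\ge v_j$), built from explicit sub/supersolutions of \eqref{eq:para} in the spirit of Lemma~\ref{lem-Delta-powers} and the Barenblatt solution, chosen to be large in the interior while having boundary trace $0$ at $\xi_0$. I expect this to be the crux, for two reasons. First, for $p\ne2$ equation \eqref{eq:para} is \emph{not} invariant under $u\mapsto cu$, so one cannot merely rescale a single template and the growing bounds $j\,d$ must come from a genuinely $j$-dependent construction. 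Second, the parabolic comparison principle (Theorem~\ref{thm-parabolic-comparison}) controls only $\{t<T\}$, so forcing $v_j$ to stay large near the whole of $\bdy\Theta\setminus\{\xi_0\}$, whose points may themselves be irregular, is delicate; this is precisely the phenomenon that prevents a single barrier from characterizing regularity (cf.\ Proposition~\ref{prop-one-barrier-intro}) and makes a family indispensable. Once the $v_j$ are in hand, the verification of \ref{cond-first}--\ref{cond-second-strong} is routine and the cycle closes.
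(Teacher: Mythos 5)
The paper you were asked to prove this for does not actually prove Theorem~\ref{thm:barrier-char} at all: it quotes it from \cite[Theorem~3.3]{BBGP}, so the comparison must be made against the proof in that reference. Your implications \ref{i-strong}$\Rightarrow$\ref{i-weak} and \ref{i-weak}$\Rightarrow$\ref{i-reg} are correct and complete, and they use exactly the right structural facts: additive invariance (if $u$ is \p-superparabolic so is $u+\eps$, even though $cu$ need not be), membership of $\eps+w_j$ in $\UU_f(\Theta)$, the identity $\lPind{\Theta} f=-\uPind{\Theta}(-f)$, and $\lPind{\Theta} f\le\uPind{\Theta} f$ from Theorem~\ref{thm-parabolic-comparison}. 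This is the standard argument and matches the cited proof.

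The genuine gap is in \ref{i-reg}$\Rightarrow$\ref{i-strong}. What you give there is a correct skeleton (take $w_j=\uPind{\Theta}f_j$ for continuous $f_j$ vanishing at $\xi_0$; continuity of $w_j$ from \cite[Theorem~5.1]{KiLi96}; \ref{cond-third} from regularity; \ref{cond-first} and \ref{cond-second-weak} reduce to \ref{cond-second-strong}), followed by an explicit admission that the decisive step --- constructing \p-subparabolic minorants $v_j\in\LL_{f_j}(\Theta)$ with $v_j\ge j\,d$ --- is left undone. That construction is not a routine verification to be postponed; it is the entire mathematical content of the implication, and in your setup even positivity of $w_j$ hangs on it. Two concrete points. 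First, fixing $d(z)=\min\{1,|z-\xi_0|\}$ \emph{before} constructing anything works against you: condition \ref{cond-second-strong} lets you \emph{choose} $d$, and the workable choice is adapted to the equation via Lemma~\ref{lem-Delta-powers}. Indeed, $b(x,t)=A|x-x_0|^{p/(p-1)}+B(t_0-t)$ is \p-subparabolic for \emph{all} $A,B>0$ (so every multiple of it is admissible), whereas the forward-in-time profile $v(x,t)=A|x-x_0|^{p/(p-1)}+B(t-t_0)$ is \p-subparabolic precisely when $B\le n\bigl(Ap/(p-1)\bigr)^{p-1}$; taking $d$ to be a truncation of $\max\{b,v\}$ with unit coefficients (this vanishes exactly at $\xi_0$, and a maximum of \p-subparabolic functions is \p-subparabolic by the comparison definition), one obtains $v_k\ge k\,d$ not by scaling a fixed template but by re-choosing coefficients for each $k$: for $p<2$ replace $A$ by $A'=k^{1/(p-1)}A\ge kA$ and $B$ by $B'=kB$, which preserves admissibility since $n\bigl(A'p/(p-1)\bigr)^{p-1}=k\,n\bigl(Ap/(p-1)\bigr)^{p-1}=B'$. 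This $k$-dependent rebalancing is exactly the ``genuinely $j$-dependent construction'' you anticipated but did not carry out. Second, your worry that irregular points of $\bdy\Theta\setm\{\xi_0\}$ make the comparison delicate is a red herring: each $v_k$ is continuous on $\overline{\Theta}$, so with $f_k:=v_k|_{\bdy\Theta}$ the membership $v_k\in\LL_{f_k}(\Theta)$ is automatic, whence $w_k:=\uPind{\Theta}f_k\ge\lPind{\Theta}f_k\ge v_k\ge k\,d$, with no regularity information about other boundary points used anywhere. So the difficulty is not where you locate it; it lies solely in exhibiting the $v_k$, which is the step your proposal omits.
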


In classical potential theory,  
a \emph{barrier} is a superharmonic (when dealing with the 
Laplace equation) or superparabolic (when dealing with the heat equation) 
function $w$ 
such that 
\[ 
\lim_{\zeta\to \xi_0} w(\zeta)=0 
   \quad \text{and} \quad
  \liminf_{\zeta\to \xi} w(\zeta)>0
   \text{ for } \xi \in \bdy \Theta \setm \{\xi_0\}.
\]
Existence of such a \emph{single} barrier implies the regularity of a 
boundary point in these classical cases, 
since  one can scale and lift the barrier
(i.e.\ if $u$ is a barrier, then also $au+b$ is 
superharmonic/superparabolic,
where $a>0$ and $b \in \R$). 
A similar property holds also for the nonlinear \p-Laplace equation
$\Delta_p u=0$.
However, this is not the case 
for the \p-parabolic equation,
since it is not homogeneous: 
If $u$ is a supersolution, then 
$au$ (with $a>0$) is usually not a supersolution, 
even though, $u+a$ is indeed still a supersolution.

We 
say that $u$ is a
\emph{traditional barrier}  at $\xi_0 \in \bdy \Theta$  if

\begin{enumerate}
 \item  $u$ is a  positive \p-superparabolic function in $\Theta$;
 \item  $\lim_{{\Theta \ni} \zeta\to \xi_0} u (\zeta)=0$;
 \item $  \liminf_{\zeta\to \xi} u(\zeta)>0$
    for  all $ \xi \in \bdy \Theta \setm \{\xi_0\}$.
\end{enumerate}

It is clear that regularity implies the existence of a traditional
barrier (this follows e.g.\ from \ref{i-strong} in Theorem~\ref{thm:barrier-char}
above).
Conversely, as mentioned in the introduction,
it has been an 
open problem whether the existence
of a traditional barrier characterizes regularity, 
which we solve in the negative when $p<2$.

The following results are important consequences of the barrier
characterization in Theorem~\ref{thm:barrier-char}.

\begin{prop} \label{prop-restrict}
\textup{(\cite[Proposition~3.4]{BBGP})}
Let $\xi_0 \in \bdy \Theta$ and let $G \subset \Theta$ be open and such
that $\xi_0 \in \bdy G$.
If $\xi_0$ is regular with respect to $\Theta$,
then $\xi_0$ is regular with respect to $G$.
\end{prop}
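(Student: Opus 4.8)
The plan is to deduce regularity with respect to $G$ from the barrier characterization in Theorem~\ref{thm:barrier-char} by restricting a barrier family from $\Theta$ to $G$. Since $\xi_0$ is regular with respect to $\Theta$, the equivalence \ref{i-reg}$\,\Leftrightarrow\,$\ref{i-strong} supplies a \emph{strong} barrier family $\{w_j\}$ in $\Theta$ at $\xi_0$, together with a function $d\in C(\overline\Theta)$ such that $d\ge0$, with $d(z)=0$ exactly when $z=\xi_0$, and for each $k$ there is $j=j(k)$ with $w_j\ge kd$ in $\Theta$. I would then put $\tilde w_j:=w_j|_G$ and $d_G:=d|_{\overline G}$ and check that $\{\tilde w_j\}$ is again a strong barrier family, now in $G$ at $\xi_0$; the implication \ref{i-strong}$\,\Rightarrow\,$\ref{i-reg} of Theorem~\ref{thm:barrier-char}, applied to $G$ in place of $\Theta$, would then give regularity with respect to $G$.

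Most of the conditions are inherited by restriction, and I expect them to be routine. First I would note that the restriction of a \p-superparabolic function to the open subset $G$ is \p-superparabolic: lower semicontinuity passes to the restriction, a dense set of finiteness in $\Theta$ intersects the open set $G$ in a set dense in $G$, and each space-time box $Q\Subset G$ also satisfies $Q\Subset\Theta$, so the comparison condition in Definition~\ref{def:superparabolic} survives. Positivity and, for the strong family, continuity are clearly preserved, and since $G\subset\Theta$ the relation $\lim_{\Theta\ni\zeta\to\xi_0}w_j(\zeta)=0$ forces $\lim_{G\ni\zeta\to\xi_0}\tilde w_j(\zeta)=0$. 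Because $\xi_0\in\bdy G\subset\overline G\subset\overline\Theta$, the restricted function $d_G$ lies in $C(\overline G)$, is nonnegative and vanishes on $\overline G$ precisely at $\xi_0$, and restricting the inequality gives $\tilde w_j\ge kd_G$ in $G$, so condition~\ref{cond-second-strong} holds for $\{\tilde w_j\}$.

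The step I expect to be the main obstacle is the growth condition~\ref{cond-second-weak} for the restricted family, since $\bdy G$ will in general contain points in the interior of $\Theta$, where the original condition on $\bdy\Theta$ gives no control and a bare positive barrier need not be large. This is exactly where I would exploit the strong structure rather than merely restricting a weak barrier family. Fixing $k$ and using that $\overline\Theta$ is compact (as $\Theta$ is bounded) while $d$ is continuous and strictly positive off $\xi_0$, the number $m_k:=\min\{d(z):z\in\overline\Theta,\ |z-\xi_0|\ge1/k\}$ is positive. Choosing $k'$ with $k'm_k\ge k$ and the associated $j=j(k')$, continuity of $d$ gives, for every $\xi\in\bdy G$ with $|\xi-\xi_0|\ge1/k$,
\[
   \liminf_{G\ni\zeta\to\xi}\tilde w_j(\zeta)
   \ge\liminf_{\zeta\to\xi}k'd(\zeta)
   =k'd(\xi)\ge k'm_k\ge k,
\]
which is condition~\ref{cond-second-weak} on $\bdy G$. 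Hence $\{\tilde w_j\}$ is a strong barrier family in $G$ at $\xi_0$, and the only genuine difficulty --- the appearance of new interior boundary points of $G$ --- is absorbed by the uniform lower bound provided by $d$.
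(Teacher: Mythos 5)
Your proof is correct and is essentially the intended argument: the paper itself only quotes this result from \cite[Proposition~3.4]{BBGP}, and the whole point of the \emph{strong} barrier family in Theorem~\ref{thm:barrier-char} --- condition~\ref{cond-second-strong} with the function $d$ --- is precisely to make the restriction argument work on the new portion of $\bdy G$ lying inside $\Theta$, which is exactly how you use it. Your verification of condition~\ref{cond-second-weak} for the restricted family, via compactness of $\overline{\Theta}$ and strict positivity of $d$ away from $\xi_0$, is the right (and standard) way to close that gap.
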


\begin{prop}  \label{prop-local}
\textup{(\cite[Proposition~3.5]{BBGP})}
Let $\xi_0 \in \bdy \Theta$ 
and $B$ be a ball containing $\xi_0$.
Then
$\xi_0$ is regular with respect to $\Theta$
if and only if $\xi_0$ is regular with respect to $B \cap\Theta$.
\end{prop}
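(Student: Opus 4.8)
Regularity is characterized by the existence of a barrier family (Theorem~\ref{thm:barrier-char}), so I would recast the statement entirely in terms of barrier families at $\xi_0$ for the two domains $\Theta$ and $B\cap\Theta$. One implication is essentially free. If $\xi_0$ is regular with respect to $\Theta$, set $G:=B\cap\Theta$. Then $G$ is open, $G\subset\Theta$, and $\xi_0\in\bdy G$: indeed, since $\xi_0$ lies in the open ball $B$ and $\xi_0\in\bdy\Theta$, every neighbourhood of $\xi_0$ meets $B\cap\Theta$, while $\xi_0\notin\Theta$ gives $\xi_0\notin B\cap\Theta$. Hence Proposition~\ref{prop-restrict} immediately yields regularity of $\xi_0$ with respect to $B\cap\Theta$. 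The substance of the proof is therefore the converse.

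\textbf{The converse.} Assume $\xi_0$ is regular with respect to $B\cap\Theta$ and let $\{w_j\}$ be a barrier family at $\xi_0$ in $B\cap\Theta$, furnished by Theorem~\ref{thm:barrier-char}. The plan is to glue each $w_j$ with a large constant to manufacture a barrier family on the larger domain $\Theta$. Put $\rho:=\operatorname{dist}(\xi_0,\bdy B)>0$, which is positive precisely because $\xi_0$ is interior to $B$. For each integer $k\ge1/\rho$, use condition~\ref{cond-second-weak} of Definition~\ref{def-barrier} to pick $j=j(k)$ with $\liminf_{(B\cap\Theta)\ni\zeta\to\xi}w_{j(k)}(\zeta)\ge k$ for all $\xi\in\bdy(B\cap\Theta)$ with $|\xi-\xi_0|\ge1/k$; since $k\ge1/\rho$, this covers every $\xi\in\bdy B$. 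Define
\[
 v_k=\begin{cases}\min(w_{j(k)},k) & \text{in } B\cap\Theta,\\ k & \text{in } \Theta\setm B.\end{cases}
\]
The function $\min(w_{j(k)},k)$ is \p-superparabolic in $B\cap\Theta$, being the minimum of the \p-superparabolic $w_{j(k)}$ and the (constant, hence \p-parabolic) function $k$, while $k$ is \p-superparabolic on all of $\Theta$. By the pasting lemma for \p-superparabolic functions, $v_k$ is \p-superparabolic on $\Theta$: its hypothesis, that $\liminf_{(B\cap\Theta)\ni\zeta\to x}\min(w_{j(k)},k)(\zeta)\ge k$ for every $x\in\bdy(B\cap\Theta)\cap\Theta=\bdy B\cap\Theta$, holds because such $x$ satisfies $|x-\xi_0|\ge\rho\ge1/k$.

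\textbf{Verification and main obstacle.} It remains to check that $\{v_k\}$ is a barrier family in $\Theta$ at $\xi_0$. Positivity is clear, and near $\xi_0$ a full neighbourhood lies in $B\cap\Theta$ where $w_{j(k)}\to0$, so there $v_k=w_{j(k)}\to0$. Finally, given a target level $K$, take $m\ge\max(K,1/\rho)$: for $\xi\in\bdy\Theta$ with $|\xi-\xi_0|\ge1/K\ (\ge1/m)$ one obtains $\liminf_{\Theta\ni\zeta\to\xi}v_m(\zeta)\ge m\ge K$, treating separately $\xi\notin\overline B$ (where $v_m\equiv m$ nearby), $\xi\in B$ (where a full neighbourhood lies in $B$ and the barrier estimate for $w_{j(m)}$ applies), and $\xi\in\bdy B\cap\bdy\Theta$ (where the two estimates combine). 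Theorem~\ref{thm:barrier-char} then gives regularity of $\xi_0$ with respect to $\Theta$. The genuinely delicate point, which I would isolate as a lemma or import as a known gluing result, is that the pasting across $\bdy B\cap\Theta$ really produces a function that is \p-superparabolic on all of $\Theta$ and not merely piecewise; this is exactly where the structure of the barrier family is essential, since condition~\ref{cond-second-weak} forces $w_{j(k)}\ge k$ near the whole sphere $\bdy B$ (all of which is at distance $\ge\rho$ from $\xi_0$), so that the inner function matches the outer constant along the gluing surface, no downward jump is created, and both lower semicontinuity and the box-wise comparison principle of Definition~\ref{def:superparabolic} survive.
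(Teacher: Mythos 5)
Your proof is correct, and since the present paper gives no proof of Proposition~\ref{prop-local} at all (it imports it from \cite[Proposition~3.5]{BBGP}), the right comparison is with the cited source, whose argument is the same barrier-pasting one as yours: the easy direction via Proposition~\ref{prop-restrict}, and the converse by taking a barrier family for $B\cap\Theta$ from Theorem~\ref{thm:barrier-char}, truncating each member at a large constant $k$, and extending by that constant to $\Theta\setm B$ using the pasting lemma \cite[Lemma~2.9]{BBGP} (the same lemma this paper invokes in Section~\ref{sect-p<2}). Your key observation — that condition~\ref{cond-second-weak}, applied with $k\ge 1/\operatorname{dist}(\xi_0,\partial B)$, forces $w_{j(k)}\ge k$ along $\partial B\cap\Theta$, so the glued function is lower semicontinuous there and the pasting lemma genuinely applies — is precisely the crux of that proof.
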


It is easy to see that regularity is invariant under translations, and we 
therefore formulate most of our regularity results around the origin.
See \cite{BBGP} for more on boundary regularity.

\section{Scaling invariance}
\label{sect-scaling}

The main aim of this section is to
 prove the following result.

\begin{prop} \label{prop-scaling}
Let $p \ne 2$, $a>0$ and $\Theta \subset \R^{n+1}$ be a domain
with $(0,0) \in \bdy \Theta$.
Set 
\[
    \Thetat=\{(ax,t)\in \R^{n+1} : (x,t) \in \Theta\}.
\]
Then $(0,0)$ is regular with respect to $\Theta$
if and only if it is regular with respect to $\Thetat$.
\end{prop}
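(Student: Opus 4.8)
The plan is to reduce everything to the barrier characterization of Theorem~\ref{thm:barrier-char}, together with the observation that, precisely because $p\ne2$, a spatial dilation can be absorbed by a multiplicative rescaling of the unknown. Write $\Phi(x,t)=(ax,t)$ for the linear homeomorphism of $\Rno$ carrying $\Theta$ onto $\Thetat$ and fixing $(0,0)$, and set $\kappa=a^{-p/(p-2)}>0$. This exponent is exactly where $p\ne2$ enters, since it amounts to solving $\kappa^{p-2}=a^{-p}$ for a positive $\kappa$. A direct computation (or a change of variables in the weak formulation) shows that if $u$ is \p-parabolic on $\Thetat$, then $\mathcal{T}u(x,t):=\kappa\,u(ax,t)$ is \p-parabolic on $\Theta$: one finds $\bdy_t(\mathcal{T}u)-\Delta_p(\mathcal{T}u)=\kappa\,(\bdy_t u)\circ\Phi-\kappa^{p-1}a^p\,(\Delta_p u)\circ\Phi$, so solutions go to solutions precisely when $\kappa^{p-2}=a^{-p}$. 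Its inverse $\mathcal{S}v(y,t):=\kappa^{-1}v(y/a,t)$ then maps \p-parabolic functions on $\Theta$ back to \p-parabolic functions on $\Thetat$.

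Next I would check that $\mathcal{S}$ (and $\mathcal{T}$) preserve \p-superparabolicity. Lower semicontinuity and finiteness on a dense subset are immediate, since $\Phi$ is a homeomorphism and $\kappa>0$; the only delicate point is the comparison condition (iii) of Definition~\ref{def:superparabolic}. Given a space-time box $Q''\Subset\Thetat$ and a \p-parabolic $h$ on $Q''$, continuous up to the closure, with $h\le\mathcal{S}w$ on $\bdy_p Q''$, I would pull everything back by $\Phi$. The set $Q=\Phi^{-1}(Q'')\Subset\Theta$ is again a space-time box, $\mathcal{T}h$ is \p-parabolic on $Q$ and continuous on $\overline{Q}$, and since $\Phi$ maps parabolic boundaries to parabolic boundaries and $\mathcal{T}$ is order preserving, the boundary inequality transforms into $\mathcal{T}h\le w$ on $\bdy_p Q$. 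Applying the comparison property of the \p-superparabolic $w$ on $Q$ and transforming back gives $h\le\mathcal{S}w$ throughout $Q''$, as required.

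With this in hand the proof is short. Assuming $(0,0)$ is regular with respect to $\Theta$, Theorem~\ref{thm:barrier-char} furnishes a barrier family $\{w_j\}$ at $(0,0)$ in $\Theta$, and I would claim that $\tilde{w}_j:=\mathcal{S}w_j$ is a barrier family at $(0,0)$ in $\Thetat$. Positivity and \p-superparabolicity follow from the previous step, and $\tilde{w}_j\to0$ at $(0,0)$ because $\Phi^{-1}$ is continuous and fixes the origin. The one genuinely technical point is condition~\ref{cond-second-weak}, since $\Phi$ distorts Euclidean distances (space is scaled but time is not); I expect this to be the main obstacle, though only a bookkeeping one. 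I would handle it with the elementary bound $|\Phi^{-1}\xi|\ge|\xi|/\max(a,1)$: given a target $k$, a point $\xi\in\bdy\Thetat$ with $|\xi|\ge1/k$ is carried to $\eta=\Phi^{-1}\xi\in\bdy\Theta$ with $|\eta|\ge1/k'$, where $k'$ is any integer exceeding $\max(a,1,\kappa)\,k$. The defining property of $\{w_j\}$ then yields a $j$ with $\liminf w_j\ge k'$ near $\eta$, whence $\liminf\tilde{w}_j=\kappa^{-1}\liminf w_j\ge\kappa^{-1}k'\ge k$ near $\xi$. Thus $\{\tilde{w}_j\}$ is a barrier family and Theorem~\ref{thm:barrier-char} makes $(0,0)$ regular with respect to $\Thetat$. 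The converse is immediate by symmetry, running the same argument with $\Thetat$ and $1/a$ in place of $\Theta$ and $a$; everything else is the algebraic identity $\kappa^{p-2}=a^{-p}$ and the routine comparison-principle transfer.
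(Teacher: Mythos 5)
Your proof is correct, and its engine---the substitution $\mathcal{T}\ut(x,t)=\kappa\,\ut(ax,t)$ with $\kappa=a^{-p/(p-2)}$, available precisely because $p\ne 2$---is exactly the paper's; where you differ is in how regularity is transferred. The paper never touches the barrier characterization: having noted that $\mathcal{T}$ preserves \p-superparabolicity, it observes that $\mathcal{T}$ is an order-preserving bijection of the upper class $\UU_{\ft}(\Thetat)$ onto $\UU_f(\Theta)$ for $f=\kappa\ft(a\,\cdot,\cdot)$, so the upper Perron solutions themselves transform under the same substitution, and regularity with respect to $\Thetat$ then follows in one line from Definition~\ref{def:regular}, since $f\mapsto\ft$ is a bijection of the continuous boundary data preserving values at the origin up to the positive factor $\kappa$. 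You instead invoke Theorem~\ref{thm:barrier-char} and transport a barrier family, which obliges you to re-verify conditions \ref{cond-first}--\ref{cond-second-weak} of Definition~\ref{def-barrier}, including the $\max(a,1,\kappa)$ bookkeeping forced by the anisotropic distortion of distances (your estimate there is fine). Both routes work; the paper's is shorter, does not lean on the imported barrier theorem, and yields the Perron-solution identity as a by-product. What your version buys in exchange is rigor at the one point the paper elides: the paper asserts that \p-superparabolicity transfers ``from'' the formal computation of $\partial_t u$ and $\Delta_p u$, which literally applies only to smooth functions, whereas your pullback of space-time boxes and parabolic boundaries under $\Phi$, combined with the change of variables in the weak formulation, is the argument that actually justifies this for general lower semicontinuous functions defined via the comparison property of Definition~\ref{def:superparabolic}.
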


A direct consequence, is that if 
$\theta:(-1,0) \to (0,\infty)$ is a bounded 
continuous function, then
$(0,0)$ is regular for 
$\partial_t u - \Delta_p u=0$, $p \ne 2$,  with respect to 
\[
     \Theta_K=\{(x,t)\in \R^{n+1}: |x| < K \theta(t) \text{ and } -1< t<0\}
\]
if and only if it is regular with respect to $\Theta_1$, i.e.\
regularity is independent of $K>0$.
Thus, there is no
Petrovski\u\i-type
criterion for $p \ne 2$ of the same type as for $p=2$.

\begin{proof}
Let $\ut$ be a function on $\Thetat$ and set
\[
    u(x,t)= K \ut(ax,t) \text{ for } (x,t) \in \Theta,
    \quad \text{where }
    K=a^{-p/(p-2)}.
\]
Then
\[
    \partial_t u(x,t)=K \partial_t \ut(ax,t)
    \quad \text{and} \quad
    \Delta_p u(x,t) = K^{p-1} a^p \Delta_p \ut(ax,t)
    = K \Delta_p \ut(ax,t),
\]
from which it follows that $u$ is \p-superparabolic in $\Theta$
if and only if $\ut$ is \p-super\-par\-a\-bol\-ic in $\Thetat$.
Next let $\ft \in C(\bdy \Thetat)$ and set 
\[
    f(x,t)= K \ft(ax,t) \quad \text{for } (x,t) \in \bdy\Theta.
\]
Then we see from the above that
\[
    \uPind{\Theta} f (x,t) = \uPind{\Thetat} (K\ft)(ax,t)
    \quad \text{for } (x,t) \in \Theta.
\]
This shows that regularity of the origin with respect to $\Theta$ 
implies regularity with respect to $\Thetat$.
The converse implication follows by switching the roles of $\Theta$ and
$\Thetat$, and replacing $a$  by $1/a$.
\end{proof}

We conclude this section by briefly comparing the linear and 
nonlinear cases regarding multiplied equations.

\begin{definition}
Let $1<p<\infty$.
A boundary point $\xi_0\in \partial \Theta$ is 
\emph{completely regular} with respect to $\Theta$, if
whenever  $f: \partial \Theta \to \R$ is continuous and $a>0$,
\[
        \lim_{\Theta \ni \xi \to \xi_0} \uPa f(\xi)=f(\xi_0)
\]
(where 
$\uPa$ denotes the upper Perron solution with respect
to the equation $a \partial_t u =\Delta_p u $),
i.e.\ whenever
$\xi_0$ is simultaneously regular for all the multiplied equations.
\end{definition}

\begin{remark}
\label{rmk-complete-reg}
By
Theorem~3.6 in Bj\"orn--Bj\"orn--Gianazza--Parviainen~\cite{BBGP}
regularity and 
complete regularity are the same when $p \ne 2$.
On the contrary, it follows from 
the classical Petrovski\u\i\ criterion that complete regularity
is a strictly stronger condition  when $p=2$.
The Petrovski\u\i\ criterion also shows that one may
replace ``regular'' by ``completely regular'' in 
Theorem~\ref{thm-Petrovskii} for $p=2$ as well,
thus providing examples of completely
regular boundary points for $p=2$.

More generally, consider
\[
\Theta=\{(x,t)\in\R^{n+1}: 
        |x|<\sqrt{-t}\sqrt{\log |{\log(-t)}|} h(t) \text{ and } -1< t<0\},
\]
where $h$ is a positive continuous function.
A scaling argument, similar to the proof of
Proposition~\ref{prop-scaling} (together with  Petrovski\u\i's criterion), 
shows that when $h(t):=K>0$ is constant, then 
$(0,0)$ is regular for $a  \partial_t u =\Delta u $ if
and only if $K \le 2/\sqrt{a}$.
Thus, for nonconstant $h$,
$(0,0)$ is \emph{completely} 
regular for $p=2$ if $\lim_{t \to 0\limminus} h(t)=0$,
while it is \emph{not} completely regular if $\liminf_{t \to 0\limminus} h(t) >0$.
Moreover, if $\lim_{t \to 0\limminus} h(t)=\infty$ then $(0,0)$
is not regular for any 
$a \partial_t u =\Delta u $.

The Petrovski\u\i\ criterion and the classical barrier characterization
for the heat equation show that the existence of a (traditional) barrier
for the heat equation does not imply complete regularity.
Lanconelli~\cite[Theorem~1.1]{Lanconelli77} showed
that if a point is regular
for $a_1  \partial_t u =\Delta u $ and $0 < a_2< a_1$, then
it is also regular for $a_2  \partial_t u =\Delta u $.
Thus the existence of a \emph{countable} barrier family with one barrier
for each $a=1,2,\ldots$, is equivalent to the complete regularity
when $p=2$.

All of this suggests that regularity for $p \ne 2$ rather corresponds
to complete regularity for $p=2$ than to regularity for $p=2$.
Also Proposition~\ref{prop-scaling} holds for $p=2$ and complete
regularity.

By Fabes--Garofalo--Lanconelli~\cite[Corollary~1.4]{FabesGarofaloLanconelli},
complete regularity for $p=2$ is equivalent to simultaneous regularity
for all linear 
parabolic equations 
of the form $ \partial_t u =\Div(A(x,t)\nabla u)$,
where $A(x,t)$ is a symmetric uniformly elliptic matrix
with $C^1$-Dini continuous coefficients.
\end{remark}

\section{The singular case \texorpdfstring{$1<p<2$}{}}
\label{sect-p<2}

We start this section by proving 
Theorem~\ref{thm-Petrovskii} in the singular range.

\begin{proof}[Proof of Theorem~\ref{thm-Petrovskii} for $1<p<2$.]
When $q>1/p$ and $K >1$, 
regularity was obtained  in
Bj\"orn--Bj\"orn--Gianazza--Parviainen~\cite[Proposition~7.1]{BBGP}. 
It follows
from Proposition~\ref{prop-restrict},
that any $K>0$ will do; this follows also from 
Proposition~\ref{prop-scaling}.

Now assume that $0 < q <1/p$.
By Proposition~\ref{prop-scaling} we can assume that $K=1$.
We shall construct an \emph{irregularity barrier}
(in the terminology of~\cite{KiLi96} 
and Petrovski\u{\i}~\cite[p.\ 389]{Petro2}).
Let 
\[
   u(x,t)= \begin{cases}
      \displaystyle \frac{|x|^{p/(p-1)}}{(-t)^{pq/(p-1)}} - \frac{n}{1-pq} 
       \biggl(\frac{p}{p-1}\biggr)^{p-1}(-t)^{1-pq}, &
      \text{if } (x,t) \in \overline{\Theta} \setm \{(0,0)\}, \\
      1, & \text{if } (x,t)= (0,0).
      \end{cases}
\]
Using Lemma~\ref{lem-Delta-powers} we see that in $\Theta$,
\[
    \partial_t u = \frac{pq}{p-1} \frac{|x|^{p/(p-1)}}{(-t)^{1+pq/(p-1)}}
    + \frac{n}{(-t)^{pq}}\biggl(\frac{p}{p-1}\biggr)^{p-1} 
    \ge \frac{n}{(-t)^{pq}} \biggl(\frac{p}{p-1}\biggr)^{p-1}
    = \Delta_p u.
\]
Hence, $\partial_t u - \Delta_p u \ge 0$ in $\Theta$,
which shows that $u$ is \p-superparabolic in $\Theta$.

Let $f=u|_{\bdy \Theta} \in C(\bdy \Theta)$
and let $v \in \LL_f(\Theta)$.
By the parabolic comparison principle (Theorem~\ref{thm-parabolic-comparison}),
with $T=0$, we see that
$v \le u$ in $\Theta$,
and thus
we also have $\lP f \le u$.
But then
\[
     \liminf_{\Theta \ni (x,t) \to (0,0)} \lP f(x,t) 
     \le     \liminf_{\Theta \ni (x,t) \to (0,0)} u(x,t) 
   \le    \liminf_{t \to 0 \limminus} u(0,t) 
    = 0 < 1 =f(0,0).
\]
Hence $(0,0)$ is irregular for $\Theta$.
\end{proof}

Next, we turn to Proposition~\ref{prop-one-barrier-intro}.
First, we formulate it in a different form
which also gives regularity for small boundary data.

\begin{prop} \label{prop-one-barrier}
Let $1<p<2$ and $0 < q  \le 1/p$.
Then there is a traditional barrier $u$ at $(0,0)$ for the domain
\[ 
\Theta=\{(x,t)\in\Rno: |x|<(-t)^q \text{ and } -1 < t<0\}.
\]
In particular if $f \in C(\bdy \Theta)$ satisfies
$|f-f(0,0)| \le g$ on $\bdy \Theta$,
where 
\[
g(x,t)= \frac{B}{2}\min\biggl\{ -t,\biggl( \frac{B}{2} \biggr)^{(p-1)/pq} \biggr\}^{1/(2-p)}
 \text{ and } 
B=\min\biggl\{n( 2-p)    \biggl(\frac{p}{p-1}\biggr)^{p-1},1\biggr\},
\]
then 
\begin{equation} \label{uP-small-f}
     \lim_{\Theta \ni (x,t) \to (0,0)} \uP f(x,t) 
     =       \lim_{\Theta \ni (x,t) \to (0,0)} \lP f(x,t)  =f(0,0).
\end{equation}
\end{prop}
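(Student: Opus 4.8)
The plan is to prove the quantitative statement \eqref{uP-small-f} by a direct comparison argument and to extract the traditional barrier from the same construction; the point to keep in mind is that, since the equation is not homogeneous, a barrier cannot be freely rescaled, so it is exactly the smallness hypothesis $|f-f(0,0)|\le g$ that lets an \emph{unscaled} comparison function do the job. Concretely, I would aim to produce a single positive $p$-superparabolic function $U$ on $\Theta$ with $\lim_{(x,t)\to(0,0)}U(x,t)=0$ and $\liminf_{\Theta\ni\eta\to\xi}U(\eta)\ge g(\xi)$ for every $\xi\in\bdy\Theta$. Granting such a $U$, the function $f(0,0)+U$ is $p$-superparabolic, bounded below, and satisfies $\liminf_{\eta\to\xi}(f(0,0)+U)\ge f(0,0)+g(\xi)\ge f(\xi)$, so it lies in $\UU_f(\Theta)$ and hence $\uP f\le f(0,0)+U$; symmetrically $f(0,0)-U\in\LL_f(\Theta)$, giving $\lP f\ge f(0,0)-U$. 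Letting $(x,t)\to(0,0)$ and using $U\to 0$ together with $\lP f\le\uP f$ yields \eqref{uP-small-f}. Moreover such a $U$ is itself a traditional barrier: it is positive and $p$-superparabolic, tends to $0$ at $(0,0)$, and since $g>0$ away from the origin it has positive $\liminf$ at every other boundary point.

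The candidate I would try first is the self-similar function
\[
U_0(x,t)=(-t)^{1/(2-p)}\bigl(\tfrac{B}{2}-|x|^{p/(p-1)}\bigr).
\]
Its superparabolicity is checked by Lemma~\ref{lem-Delta-powers}: with $N=n(p/(p-1))^{p-1}$ one has $\Delta_p(-|x|^{p/(p-1)})=-N$, while differentiating the time factor and cancelling the common power $(-t)^{(p-1)/(2-p)}$ reduces $\partial_t U_0-\Delta_p U_0\ge 0$ to the pointwise inequality $N-\tfrac{1}{2-p}(\tfrac B2-|x|^{p/(p-1)})\ge0$, which holds (worst case $x=0$) precisely when $\tfrac B2\le n(2-p)(p/(p-1))^{p-1}$. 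This is exactly why $B$ is capped by $n(2-p)(p/(p-1))^{p-1}$. On the axis $U_0(0,t)=\tfrac B2(-t)^{1/(2-p)}$ coincides with $g$ for small $-t$, and $U_0\to0$ at the origin. The threshold $c_0=(B/2)^{(p-1)/(pq)}$ in the $\min$ defining $g$ is exactly the largest $-t$ for which the whole cross-section $|x|<(-t)^q$ still lies in $\{|x|^{p/(p-1)}<B/2\}$, i.e. for which $U_0>0$ on the entire slice.

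The main obstacle is that $U_0$ fails two of the required properties globally. First, $U_0$ is positive only on the cross-sections with $-t<c_0$; in the fat region $-t\ge c_0$ (in particular near $t=-1$, where $|x|$ ranges up to nearly $1$) it takes nonpositive values, so it is not by itself a barrier on all of $\Theta$. Second, on the lateral boundary $U_0=(-t)^{1/(2-p)}(\tfrac B2-(-t)^{pq/(p-1)})$ falls just short of $g$. I would repair this by letting the self-similar profile do all the work near the origin on $-t<c_0$ and pasting it (using the standard fact that the minimum of two $p$-superparabolic functions is $p$-superparabolic, together with the comparison principle, Theorem~\ref{thm-parabolic-comparison}) into a positive $p$-superparabolic function on the fat region, where $g$ has already saturated to the constant $\tfrac B2 c_0^{1/(2-p)}$; the truncation $\min\{-t,c_0\}$ in $g$, and the cap $B\le1$ which keeps this constant small, are precisely what allow a bounded comparison function to dominate $g$ there. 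Making the paste simultaneously $p$-superparabolic, positive up to the lateral boundary, and $\ge g$ throughout $\bdy\Theta$ — especially at the junction where the slice $-t=c_0$ meets $\bdy\Theta$ and $U_0$ degenerates to $0$ — is the delicate step, and I expect to spend most of the effort calibrating $B/2$ and $c_0$ so that the self-similar and constant pieces match with $U\ge g$ everywhere on $\bdy\Theta$.

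Once such a $U$ is in hand, the comparison argument of the first paragraph simultaneously establishes the existence of a traditional barrier at $(0,0)$ and the two-sided limit \eqref{uP-small-f}.
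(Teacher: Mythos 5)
Your first paragraph is exactly the paper's scheme (compare $f(0,0)\pm U$ with the Perron classes, and observe that such a $U$ is itself a traditional barrier), and your profile is nearly the paper's. The gap is that the global positive \p-superparabolic majorant of $g$ is never actually constructed, and the repair you sketch cannot be made to work. You propose to keep $U_0(x,t)=(-t)^{1/(2-p)}\bigl(\tfrac{B}{2}-|x|^{p/(p-1)}\bigr)$ on the thin (future) region $\{-t<c_0\}$, $c_0=(B/2)^{(p-1)/pq}$, and to paste it across the time slice $\{-t=c_0\}$ onto a piece that, on the fat (past) region, dominates the saturated value $A:=\tfrac{B}{2}c_0^{1/(2-p)}$ of $g$. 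But at the corner where that slice meets the lateral boundary, $U_0$ vanishes, so near the corner any such paste jumps \emph{down} as time increases across $-c_0$. A \p-superparabolic function cannot do that: if the junction slice carries the past values ($\ge A$), lower semicontinuity fails, since the limit from the future side is $U_0<A$ there; if it carries the future values, the comparison principle of Definition~\ref{def:superparabolic} fails, because a \p-parabolic $h$ in a small box straddling $t=-c_0$ near the corner, with continuous boundary data $\le$ the pasted function but close to $A$ on the bottom and on the lateral sides up to time $-c_0$, remains close to $A$ at time $-c_0$ inside, exceeding $U_0$ there. So no calibration of $B/2$ and $c_0$ fixes the time-slice pasting; and a plain global minimum with $U_0$ is also ruled out, since $U_0\le0$ in part of the fat region. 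Note also that, as you observe yourself, $U_0<g$ at \emph{every} lateral boundary point, which is caused precisely by taking the coefficient $B/2$ in the profile.

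The paper resolves exactly this point by changing both the coefficient and the geometry of the paste. It uses the full coefficient $B$: $v(x,t)=(-t)^{1/(2-p)}\bigl(B-|x|^{p/(p-1)}\bigr)$, which is still \p-superparabolic because $B\le n(2-p)(p/(p-1))^{p-1}$, and which now satisfies $v\ge\tfrac{B}{2}(-t)^{1/(2-p)}\ge g$ wherever $|x|^{p/(p-1)}\le\tfrac{B}{2}$. The paste is then made across the \emph{lateral cylinder} $\bigl\{|x|^{p/(p-1)}=\tfrac{B}{2}\bigr\}$ rather than a time slice: with
\[
\Thetap=\bigl\{(x,t)\in\Theta:|x|^{p/(p-1)}<\tfrac{1}{2}B\bigr\},
\qquad
M=\inf_{\Theta\cap\bdy\Thetap}v=\Bigl(\frac{B}{2}\Bigr)^{1+(p-1)/pq(2-p)}>0,
\]
one sets $u=\min\{v,M\}$ in $\Thetap$ and $u=M$ in $\Theta\setm\Thetap$. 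Here $M>0$ precisely because $\Theta\cap\bdy\Thetap$ lies in the fat region $-t\ge c_0$; since $v\ge M$ on that junction, $u$ is continuous across it, and the pasting lemma \cite[Lemma~2.9]{BBGP} (not merely the fact that a minimum of \p-superparabolic functions is \p-superparabolic) gives that $u$ is \p-superparabolic, positive, $\ge g$, and tends to $0$ at the origin. The lateral pasting avoids your corner problem entirely: on the junction cylinder the inner piece is bounded below by $M>0$, so there is no point where it degenerates. As it stands, the ``delicate step'' you defer is the actual content of the proof, and the route you propose through it fails; to complete the argument you would need to switch to this lateral-cylinder construction (or an equivalent one).
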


Of course, we have a traditional barrier also when $q>1/p$.
The point here is that we obtain 
a traditional barrier
even at an irregular boundary point.
Note that for $q=1/p$ we find one traditional barrier, 
but we do not know whether the origin is regular or not.

\begin{proof}
Let 
\[
    v(x,t)=(-t)^ {1/(2-p)} (B-|x|^{p/(p-1)}),
      \quad (x,t) \in \Theta. 
\]
By Lemma~\ref{lem-Delta-powers}, we have in $\Theta$,
\[
    \Delta_p v = - n \biggl(\frac{p}{p-1}\biggr)^{p-1} (-t)^{(p-1)/(2-p)}
\]
and
\[
    \partial_t v = - \frac{1}{2-p} (-t)^{1/(2-p)-1} (B-|x|^{p/(p-1)})
     \ge        - \frac{B}{2-p} (-t)^{(p-1)/(2-p)}.
\]
Hence
\[
   \partial_t v - \Delta_p v \ge \biggl(n \biggl(\frac{p}{p-1}\biggr)^{p-1}
      - \frac{B}{2-p}\biggr) (-t)^{(p-1)/(2-p)}
   \ge0,
\]
and thus $v$ is \p-superparabolic in $\Theta$.
Next, let
\[
  \Thetap=\bigl\{(x,t)\in\Theta: |x|^{p/(p-1)} < \tfrac{1}{2} B\bigr\},
\]
and
\[
    M= \inf_{(x,t) \in \Theta \cap \bdy \Thetap} v(x,t) 
      = \biggl(\frac{B}{2}\biggr)^{1+(p-1)/pq(2-p)}.
\]
Then
\[
  u(x,t) = \begin{cases}
            \min\{v(x,t),M\}, & \text{if } (x,t) \in \Thetap, \\
            M, & \text{if } (x,t) \in \Theta \setm \Thetap,
            \end{cases}
\]
is \p-superparabolic in $\Theta$, by the pasting lemma
in Bj\"orn--Bj\"orn--Gianazza--Par\-vi\-ain\-en~\cite[Lemma~2.9]{BBGP}.
It is also easily seen that $u$ satisfies the remaining properties
required of a traditional barrier.

Finally, if $|f-f(0,0)| \le g$ on $\bdy \Theta$, then
for $(x_0,t_0) \in \bdy \Theta$,
\[
\limsup_{\Theta \ni (x,t) \to (x_0,t_0)} (f(0,0)-u(x,t)) \le f(x_0,t_0) \le
\liminf_{\Theta \ni (x,t) \to (x_0,t_0)} (f(0,0)+u(x,t))
\]
and hence
\[
   f(0,0)-u \le \lP f \le \uP f \le f(0,0)+u 
   \quad \text{in } \Theta,
\]
from which \eqref{uP-small-f} follows directly, as 
$\lim_{(x,t) \to (0,0)} u(x,t)=0$.
\end{proof}

\begin{proof}[Proof of Proposition~\ref{prop-one-barrier-intro}.]
The existence of a traditional barrier 
follows from Proposition~\ref{prop-one-barrier}
if $K=1$. For general $K>0$ the existence
follows from the scaling argument in the proof of
Proposition~\ref{prop-scaling}.
The irregularity is a direct consequence of
Theorem~\ref{thm-Petrovskii}.
\end{proof}

\section{The degenerate case \texorpdfstring{$p>2$}{}}
\label{sect-p>2}

The following theorem and its proof refine
the results in Lindqvist~\cite[Theorem, p.~571]{lindqvist95}
and Bj\"orn--Bj\"orn--Gianazza--Parviainen~\cite[Theorem~6.1]{BBGP}.

\begin{thm}\label{thm:Petr-deg}
Let $p>2$, $t_0 <0$ and
\[
\Theta=\{(x,t)\in\R^{n+1}: 
     |x|   <\zeta(t) \text{ and } t_0<t<0 \},
\]
where 
$\zeta$  is a positive continuous 
function on $(t_0,0)$ such that 
\begin{equation}  \label{eq-lim-de-new}
\lim_{t\to0\limminus}(-t)^{-1/p}\zeta(t)=0.
\end{equation} 
Then the origin $(0,0)$ is regular with respect to $\Theta$.
\end{thm}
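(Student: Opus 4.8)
The plan is to prove regularity by constructing a strong barrier family at the origin and then invoking the barrier characterization in Theorem~\ref{thm:barrier-char}. The scaling condition~\eqref{eq-lim-de-new}, namely $(-t)^{-1/p}\zeta(t)\to 0$, is precisely the statement that the domain is ``thinner'' than the critical Barenblatt-type aperture $q=1/p$, and this is exactly the borderline that Theorem~\ref{thm-Petrovskii} singles out as regular in the degenerate case. So the barriers should be built from the same homogeneous power-functions that appear in Lemma~\ref{lem-Delta-powers}, tuned to the exponent $\al=p/(p-2)$, which is the one making $\Delta_p(C|x|^\al)$ proportional to $|x|^\al$.

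\medskip

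The key idea is to mimic the singular-case barrier construction from the proof of Proposition~\ref{prop-one-barrier}, but now exploiting~\eqref{eq-lim-de-new} to dominate the time derivative. Concretely, I would look for a family of functions of the form
\[
    w_j(x,t) = A_j\Bigl( c_j\, |x|^{p/(p-2)} (-t)^{-\beta} + \psi_j(t)\Bigr),
\]
where the spatial part is the $\al=p/(p-2)$ power from Lemma~\ref{lem-Delta-powers}, and $\psi_j$ is a suitable time-factor chosen so that $\partial_t w_j - \Delta_p w_j \ge 0$, making each $w_j$ \p-superparabolic in $\Theta$. The exponents $\beta$ and the additive time correction should be selected so that the positive contribution of $\partial_t w_j$ (which, after differentiating the $|x|^\al(-t)^{-\beta}$ term and using $|x|<\zeta(t)$) absorbs the $\Delta_p$ term, and here~\eqref{eq-lim-de-new} is what guarantees that $|x|^\al \le \zeta(t)^\al$ is small relative to the required time powers as $t\to 0\limminus$. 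The requirement $w_j(x,t)\to 0$ as $(x,t)\to(0,0)$ forces the additive term $\psi_j(t)\to 0$, while the index $j$ lets me rescale so that $w_j$ grows large on the lateral boundary $|x|=\zeta(t)$ away from the origin, yielding condition~\ref{cond-second-weak} (or the stronger~\ref{cond-second-strong}).

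\medskip

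The main obstacle is that the nonhomogeneity of $\Delta_p$ obstructs the classical ``scale and lift a single barrier'' trick (as emphasised in the discussion before Theorem~\ref{thm:barrier-char}), so I cannot simply multiply one barrier by constants to force arbitrarily large boundary values; instead I must genuinely build a whole family and verify condition~\ref{cond-second-weak} by hand. Because of the general continuous aperture $\zeta$ (rather than a pure power $K(-t)^q$), the delicate point will be checking the differential inequality $\partial_t w_j - \Delta_p w_j\ge 0$ uniformly, where the only usable information about $\zeta$ is the limit~\eqref{eq-lim-de-new}; I expect one has to split the time interval near $t=0$, using~\eqref{eq-lim-de-new} to control things close to the origin and using positivity and continuity of $\zeta$ on compact subintervals away from $0$ for the rest. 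Once each $w_j$ is verified to be positive, \p-superparabolic, continuous, vanishing at $(0,0)$, and blowing up on the rest of $\bdy\Theta$, the family is a (strong) barrier family, and Theorem~\ref{thm:barrier-char} gives regularity of $(0,0)$; alternatively, one could first reduce to the pure-power domains covered by Theorem~\ref{thm-Petrovskii}(a) by a comparison/monotonicity argument using Proposition~\ref{prop-restrict}, but I would prefer the direct barrier construction since it handles arbitrary $\zeta$ in one stroke.
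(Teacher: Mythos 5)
Your high-level plan---build a (strong) barrier family and invoke Theorem~\ref{thm:barrier-char}---is exactly the paper's, but your concrete ansatz cannot be completed, and the obstruction is structural rather than technical. Consider the time axis $\{(0,t):t_0<t<0\}\subset\Theta$. For $w_j(x,t)=A_j\bigl(c_j|x|^{p/(p-2)}(-t)^{-\beta}+\psi_j(t)\bigr)$, Lemma~\ref{lem-Delta-powers} (the case $\al=p/(p-2)$) shows that both $\Delta_p$ and $\partial_t$ of the spatial part are proportional to $|x|^{p/(p-2)}$ and hence vanish as $x\to0$; since adding a function of $t$ alone does not change the gradient, the pointwise inequality $\partial_t w_j-\Delta_p w_j\ge0$ you intend to verify forces $\psi_j'(t)\ge0$ for every $t$ (let $x\to0$ and use continuity of all terms). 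On the other hand, condition \ref{cond-third} of Definition~\ref{def-barrier}, applied along the axis where $w_j(0,t)=A_j\psi_j(t)$, forces $\psi_j(t)\to0$ as $t\to0\limminus$. A nondecreasing function tending to $0$ is nonpositive, so $w_j\le0$ on the axis: this contradicts positivity (condition \ref{cond-first}) and also makes condition \ref{cond-second-weak} fail at the boundary point $(0,t_0)$. No choice of $\beta$, $c_j$, $\psi_j$, $A_j$ avoids this. The exponent $p/(p-2)$, precisely because $\Delta_p(C|x|^{p/(p-2)})\propto|x|^{p/(p-2)}$, is the right building block for the \emph{irregularity} barriers (Proposition~\ref{Prop:Petr-deg-irr} and the proof of Theorem~\ref{thm-Petrovskii} for $p<2$), not for regularity. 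The paper's barriers instead use a Barenblatt-type profile, $w_C=(Q^{(p-1)/(p-2)}-C^{(p-1)/(p-2)})f(t)+\rho_C(t)$ with $Q=C+c\,(|x|(-t)^{-1/\la})^{p/(p-1)}$ and $f(t)=-\de(t)^{1/(p-2)}(-t)^{-n/\la}<0$. The crucial feature is the exponent $p/(p-1)$: near the axis $Q^{(p-1)/(p-2)}-C^{(p-1)/(p-2)}$ behaves like a constant times $|x|^{p/(p-1)}$, whose \p-Laplacian is a \emph{nonzero constant} (the other special case of Lemma~\ref{lem-Delta-powers}); multiplied by $f<0$ this makes $-\Delta_p w_C$ strictly positive on the axis, and it is exactly this surplus that absorbs the derivative of the positive correction term $\rho_C$ there. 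Your ansatz has no such surplus.

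Two further points. Your fallback---reducing to the power domains of Theorem~\ref{thm-Petrovskii}\,(a) via Proposition~\ref{prop-restrict}---runs backwards: that proposition transfers regularity from a domain to its subdomains, while here $\Theta$ is in general \emph{larger} near the origin than every regular power domain (for $\zeta(t)=(-t)^{1/p}/\log(1/(-t))$, which satisfies \eqref{eq-lim-de-new}, one has $\zeta(t)\ge K(-t)^q$ near $t=0$ for every $q>1/p$ and $K>0$); indeed, in the paper the implication goes the other way, Theorem~\ref{thm-Petrovskii}\,(a) being deduced from Theorem~\ref{thm:Petr-deg}. Finally, the real work needed to handle a general aperture $\zeta$ is not a time-splitting: the paper first shows, using \eqref{eq-lim-de-new} via a sup-regularization argument, that one may assume $\de(t)=(\zeta(t)(-t)^{-1/\la})^{p/(p-1)}$ is smooth with $(-t)^{-\be}\de(t)$ nondecreasing, where $\be=n(p-2)/\la$, and it is precisely this monotonicity that gives $-\de'(t)-\be\,\de(t)/(-t)\le0$ and closes the supersolution estimate; the limit hypothesis alone, used pointwise, does not control $\de'$ and so cannot by itself verify the differential inequality.
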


In the converse direction we have the following result,
which shows that Theorem~\ref{thm:Petr-deg} is essentially sharp.
That $\zeta=(-t)^{1/p}$ with $p>2$
implies irregularity was mentioned as a footnote 
already in~\cite[p.~572]{lindqvist95}, with no further details.
Here we strengthen the statement and
provide a full proof of the result.

\begin{prop}\label{Prop:Petr-deg-irr}
Let $p>2$, $t_0 <0$ and 
\[
\Theta=\{(x,t)\in\R^{n+1}: 
        |x|   <\zeta(t) \text{ and } t_0<t<0 \},
\]
where 
$\zeta$ is a positive continuous 
function on $(t_0,0)$ 
such that 
\[
\liminf_{t\to0\limminus}(-t)^{-1/p}\zeta(t)>0.
\]
Then the origin $(0,0)$ is irregular with respect to $\Theta$.
Moreover, there is no traditional barrier at $(0,0)$.
\end{prop}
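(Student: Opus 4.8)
Since regularity forces the existence of a traditional barrier (as recorded after Theorem~\ref{thm:barrier-char}), the two assertions are not independent: it suffices to prove the \emph{moreover} part, that $(0,0)$ admits no traditional barrier, and irregularity is then automatic. Everything can be reduced to a single model domain. By Proposition~\ref{prop-local} both regularity and the existence of a traditional barrier are local, so I may replace $\Theta$ by $\Theta\cap B$ for a small ball $B$ about the origin; the hypothesis $\liminf_{t\to0\limminus}(-t)^{-1/p}\zeta(t)>0$ then yields a constant $c>0$ with $\{|x|<c(-t)^{1/p}\}\subset\Theta$ near the tip. A traditional barrier for $\Theta$ restricts to one for this narrower cusp, the boundary condition on the part of $\bdy\{|x|<c(-t)^{1/p}\}$ interior to $\Theta$ being checked from positivity and lower semicontinuity. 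Finally the $x$-scaling of Proposition~\ref{prop-scaling} carries traditional barriers to traditional barriers and normalizes $c=1$. Thus it is enough to show that the critical model cusp $\Theta_0=\{(x,t)\in\Rno:|x|<(-t)^{1/p},\ -1<t<0\}$ carries no traditional barrier at $(0,0)$.

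The explicit functions I would use come from Lemma~\ref{lem-Delta-powers} at the critical exponent $\al=p/(p-1)$. Writing $\Phi(x,t)=|x|^{p/(p-1)}(-t)^{-1/(p-1)}$ and $C_0=n(p/(p-1))^{p-1}$, the lemma gives $\Delta_p\Phi=C_0/(-t)$, while $\bdy_t\Phi=\tfrac1{p-1}|x|^{p/(p-1)}(-t)^{-p/(p-1)}$, so that $\bdy_t\Phi-\Delta_p\Phi=\frac1{-t}\bigl(\tfrac{\Phi}{p-1}-C_0\bigr)\le0$ in $\Theta_0$, because $\Phi<1<(p-1)C_0$ there. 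Hence $\Phi$ is a strictly \p-subparabolic function with $\Phi\equiv1$ on the lateral boundary and $\Phi(0,t)\equiv0$ on the axis. Adding the borderline logarithmic term cancels the bad contribution: with
\[
u(x,t)=\Phi(x,t)-C_0\log(-t)
\]
one gets $\bdy_t u-\Delta_p u=\tfrac1{p-1}|x|^{p/(p-1)}(-t)^{-p/(p-1)}\ge0$, so $u$ is \p-superparabolic. The logarithm is forced: at $q=1/p$ the bounded self-similar correction that works in the proof of Theorem~\ref{thm-Petrovskii} for $1<p<2$ has a vanishing denominator and must be replaced by $-C_0\log(-t)$, which for $p>2$ blows up at the tip rather than vanishing.

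To rule out a traditional barrier I would argue by contradiction. Let $w$ be such a barrier on $\Theta_0$: then $w>0$ is \p-superparabolic, $w\to0$ at $(0,0)$, and $w\ge c_1>0$ on the bottom near the axis by lower semicontinuity. The plan is to propagate this positivity forward along the axis up to the tip, contradicting $w\to0$. The tools are the comparison principle (Theorem~\ref{thm-parabolic-comparison}) applied against $\Phi$ on time strips $\{t<T\}$, together with the intrinsic scaling $R_\mu\colon(x,t)\mapsto(\mu^{1/p}x,\mu t)$, which maps $\Theta_0$ onto itself and preserves \p-superparabolicity, since $u(x,t)=v(\mu^{1/p}x,\mu t)$ gives $\bdy_t u-\Delta_p u=\mu(\bdy_s v-\Delta_p v)$; thus $w\circ R_\mu$ is again a positive \p-superparabolic function on $\Theta_0$. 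Comparing $w$ with $\Phi$ on $\{t<T\}$ and iterating $R_\mu$ should produce a lower bound for $w$ along the self-similar ray $|x|=\tfrac12(-t)^{1/p}$, and the borderline nature of $q=1/p$ is exactly what makes each step marginal—neither gaining nor losing a definite amount—so that the bound persists up to the tip.

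The hard part will be precisely this borderline propagation. Because $p>2$, every \p-superparabolic function of self-similar form is \emph{larger} on the axis than on the lateral boundary (its profile is non-increasing in $\Phi$ at the axis), the reverse of the singular case; consequently no bounded self-similar supersolution has an axis limit undershooting its lateral limit, and the quick single comparison that sufficed for $1<p<2$ is unavailable. Making the scaling iteration quantitative at the critical width $|x|\sim(-t)^{1/p}$—equivalently, establishing the expansion of positivity for \p-supersolutions exactly at this scaling—is therefore the heart of the argument. Once a uniform positive lower bound for $w$ along the axis is secured, it contradicts $\lim_{t\to0\limminus}w(0,t)=0$, so no traditional barrier exists; by the remark following Theorem~\ref{thm:barrier-char} the origin is then irregular, which settles both assertions.
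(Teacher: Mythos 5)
Your reduction to the model cusp $\Theta_0=\{(x,t)\in\Rno:|x|<(-t)^{1/p},\ -1<t<0\}$ is sound (restriction of a traditional barrier to a subdomain, plus Propositions~\ref{prop-restrict}, \ref{prop-local} and~\ref{prop-scaling}), and so is the logical remark that the no-barrier statement implies irregularity. But beyond that point the proposal has a genuine gap: the entire contradiction rests on the ``borderline propagation'' of positivity of $w$ along the axis by iterating the intrinsic scaling $R_\mu$, and you explicitly leave this step unproved (``should produce a lower bound'', ``the hard part will be precisely this borderline propagation''). Nothing in the proposal shows how the comparison with $\Phi$ can even be initialized: since $w\to0$ at the tip, $w$ is small near the \emph{lateral} boundary close to the origin, whereas $\Phi=1$ there, so $\Phi\le w$ fails exactly where it would be needed, and no finite or infinite iteration of $R_\mu$ is shown to gain anything. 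The log-corrected function $\Phi-C_0\log(-t)$ is indeed \p-superparabolic, but it blows up at the tip and is never actually used. As it stands this is a plan, not a proof.

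What hides the short argument is that your structural claim --- for $p>2$ ``no bounded self-similar supersolution has an axis limit undershooting its lateral limit'' --- is false, and the counterexample is precisely the function the paper uses. Take the exponent $\al=p/(p-2)$ (not $p/(p-1)$) and
\[
u(x,t)=C\biggl(\frac{|x|^p}{-t}\biggr)^{1/(p-2)},
\]
for which Lemma~\ref{lem-Delta-powers} gives, in all of $\{t<0\}$,
\[
\partial_t u-\Delta_p u=\frac{C|x|^{\al}}{(-t)^{(p-1)/(p-2)}}
\biggl(\frac{1}{p-2}-C^{p-2}(n+\al)\,\al^{p-1}\biggr)\ge0
\quad\text{whenever } 0<C^{p-2}\le\frac{(p-2)^{p-1}}{\la p^{p-1}}.
\]
This is a bounded self-similar supersolution on $\Theta_0$ that vanishes on the axis and equals $C$ on the lateral boundary $|x|=(-t)^{1/p}$ --- exactly the profile you claim cannot exist. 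With it the contradiction takes one step: if $w$ were a traditional barrier, extend it lower semicontinuously to $\bdy\Theta_0$ and choose $C>0$ admissible and no larger than the (positive) minimum of $w$ on the bottom $\{t=-1\}$. Then $C-u$ is \p-subparabolic, and at every point of $\bdy\Theta_0\setm\{(0,0)\}$ one has $\limsup\,(C-u)\le\liminf w$ (on the lateral boundary $C-u=0$, on the bottom $C-u\le C$), so Theorem~\ref{thm-parabolic-comparison} with $T=0$ yields $C-u\le w$ in $\Theta_0$; since $u(0,t)=0$, letting $t\to0\limminus$ along the axis gives $\limsup_{\Theta_0\ni(x,t)\to(0,0)}w(x,t)\ge C>0$, contradicting $w\to0$ at the origin. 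The same $u$ also gives irregularity directly (without passing through the barrier statement): take $f=u$ on $\bdy\Theta_0\setm\{(0,0)\}$ and $f(0,0)=C$; comparison gives $\lPind{\Theta_0}f\le u$, hence $\liminf_{(x,t)\to(0,0)}\lPind{\Theta_0}f\le0<C=f(0,0)$. So the missing idea is the critical-exponent supersolution $C(|x|^p/(-t))^{1/(p-2)}$; without it, your iteration scheme remains an unproven assertion, and the heuristic that rules it out is wrong.
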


As an irregular borderline case one might at first think that 
this could provide a counterexample showing that
our conjecture after Proposition~\ref{prop-one-barrier-intro}
is true.
However, the last part of Proposition~\ref{Prop:Petr-deg-irr}
shows that this is not possible in this case.

\begin{proof}[Proof of Theorem~\ref{thm-Petrovskii} for $p>2$.]
This follows directly from Theorem~\ref{thm:Petr-deg} and
Proposition~\ref{Prop:Petr-deg-irr}.
\end{proof}

\begin{proof}[Proof of Proposition~\ref{Prop:Petr-deg-irr}]
By assumption there is $m>0$ and $t_1$ such that $t_0 \le t_1<0$ and 
\[
(-t)^{-1/p}\zeta(t)> m
 \quad  \text{for } t_1 < t < 0.
\] 
Let 
\[
\Thetap=\{(x,t)\in\R^{n+1}: 
    |x|    <m (-t)^{1/p} \text{ and } t_1<t<0
 \} \subset \Theta.
\]
If we show that $(0,0)$ is irregular with respect to $\Thetap$,
then by Proposition~\ref{prop-restrict},
$(0,0)$ is irregular with respect to $\Theta$  as well.
By Proposition~\ref{prop-scaling} we may assume that $m=1$,
and by Proposition~\ref{prop-local} we may assume that $t_1=-1$.

As in Section~\ref{sect-p<2}, we construct an irregularity barrier.
Let 
\[
   u(x,t)=C \biggl(\frac{|x|^p}{-t}\biggr)^{1/(p-2)},
   \quad (x,t) \in \clThetap \setm \{(0,0)\},
\]
where $C$ is a positive constant that will be determined later.
Lemma~\ref{lem-Delta-powers} with $\al=p/(p-2)$ shows that
\[
   \Delta_p u  = \biggl(\frac{C\al}{(-t)^{1/(p-2)}}\biggr)^{p-1} (n+\alp) |x|^\alp 
   \quad \text{and} \quad
  \partial_t u  = \frac{C}{p-2} \frac{|x|^\al}{(-t)^{(p-1)/(p-2)}}.
\]
Thus, it follows that in $\Thetap$ we have
\begin{align*}
\partial_t u - \Delta_p u & =
    \frac{C|x|^\al}{(-t)^{(p-1)/(p-2)}}
    \biggl( \frac{1}{p-2} - C^{p-2}(n+\alp) \al^{p-1}\biggr) 
    \ge 0,
\end{align*}
provided that
\[
0<C^{p-2} \le \frac{1}{(p-2)(n+\alp)\al^{p-1}}
= \frac{(p-2)^{p-1}}{\la p^{p-1}},
\]
where $\la=n(p-2)+p=(p-2)(n+\alp)$. 
This makes $u$ into a positive \p-superparabolic function in $\Thetap$.
Next, it is easy to see that $f:\bdy \Thetap \to \R$ given by
\[
    f(x,t)=\begin{cases}
       u(x,t), & \text{if } (x,t) \in \bdy \Thetap \setm \{(0,0)\}, \\
       C, & \text{if } (x,t)= (0,0)
      \end{cases}
\]
is continuous.

Now, let $v \in \LL_f(\Thetap)$.
By the parabolic comparison principle (Theorem~\ref{thm-parabolic-comparison}),
with $T=0$, we see that
$v \le u$ in $\Thetap$,
and thus
we also have $\lPind{\Thetap} f \le u$.
But then
\[
     \liminf_{\Thetap \ni (x,t) \to (0,0)} \lPind{\Thetap} f(x,t) \le 
   \liminf_{\Thetap \ni (x,t) \to (0,0)} u(x,t) =0 < C=f(0,0),
\]
as $u(0,t)=0$ for $t_1<t<0$.
Hence, $(0,0)$ is irregular for $\Thetap$ and thus for $\Theta$.

Next we turn to the existence of a traditional barrier.
As in the beginning of the proof, we can reduce to $\Thetap$ with $m=1$ here as well;
if $\Theta$ had a traditional barrier
at the origin, then its restriction
to $\Thetap$ would be a traditional barrier, and after scaling we would
have a traditional barrier with $m=1$.

Assume that $w$ is a traditional barrier at $(0,0)$ for $\Thetap$ with $m=1$.
Extending $w$ to $\bdy \Thetap$ by letting
\[
     w(\xi_0)=\liminf_{\Thetap \ni \xi \to \xi_0} w(\xi)
     \quad\text{for all }\xi_0\in\partial\Thetap,
\]
makes $w$ into a lower semicontinuous function on $\clThetap$.
Moreover, $w$ is positive in $\clThetap \setm \{(0,0)\}$ and 
 $w(0,0)=0$.
Let 
\[ 
    C=\min\biggl\{\min_{(x,t_1) \in \bdy \Thetap} w(x,t_1), 
       \biggl(\frac{(p-2)^{p-1}}{\la p^{p-1}}\biggr)^{1/(p-2)}\biggr\},
\]
and let $u>0$ be the \p-superparabolic irregularity barrier 
constructed above with this (admissible) $C$.
Then $C-u$ is a \p-subparabolic function in $\Thetap$ such that
\[
     \limsup_{\Thetap \ni (x,t) \to (x_0,t_0)} (C-u(x,t)) \le
     \liminf_{\Thetap \ni (x,t) \to (x_0,t_0)} w(x,t) 
\]
for all $(x_0,t_0) \in \bdy \Thetap \setm \{(0,0)\}$.
Hence, by the parabolic comparison principle (Theorem~\ref{thm-parabolic-comparison})
again, $C-u \le w$ in $\Thetap$, and thus
\[
   \limsup_{\Thetap \ni (x,t) \to (0,0)} w(x,t) 
   \ge    
   C-     \liminf_{\Thetap \ni (x,t) \to (0,0)} u(x,t) 
   = C,
\]
which contradicts the fact that $w$ is a traditional barrier.
Hence, there is no traditional barrier at $(0,0)$.
\end{proof}

\begin{proof}[Proof of Theorem~\ref{thm:Petr-deg}]
It will be convenient to rewrite $\Theta$ as
\[
\Theta=\biggl\{(x,t)\in\R^{n+1}: 
 \biggl(\frac{|x|}{(-t)^{1/\lambda}}\biggr)^{p/(p-1)}
   <\de(t)
 \text{ and } t_0<t<0 \biggr\},
\]
where 
\[
   \de(t)=\biggl(\frac{\zeta(t)}{(-t)^{1/\la}}\biggr)^{p/(p-1)}.
\] 
Let 
\[
     \be = \frac{n(p-2)}{\la} 
     \quad \text{and} \quad
     \ga = \frac{n(p-2)}{\la(p-1)} = \frac{\be}{p-1} < \be.
\]
Then it follows directly that 
\eqref{eq-lim-de-new} is equivalent to 
\begin{equation}  \label{eq-lim-de}
\lim_{t\to0\limminus}(-t)^{-\ga}\de(t)=0.
\end{equation} 
In the proof we will use
two additional properties of the function $\de$,
namely that 
\begin{equation} \label{eq-wlog}
  \de \text{ is smooth} \quad \text{and} \quad
 t \mapsto (-t)^{-\be}\de(t) \text{ is nondecreasing}.
\end{equation}
First,  let us 
show how we can assume this without loss of generality.
Let 
\[
    h(t)=(-t)^{-\be}\de(t),
    \ 
    \tilde{h}(t)=\sup_{t_0 < s \le t} h(s)
    \text{ and }
    \td(t)=(-t)^\be \tilde{h}(t)
    \quad
    \text{for } t_0 < t < 0.
\]
Then $\td\ge\de$ and $(-t)^{-\be}\td(t)=\tilde{h}(t)$ is nondecreasing.
We also need that
\begin{equation} \label{eq-lim=0}
    0=\lim_{t\to0\limminus} (-t)^{-\ga} \td(t)
     = \lim_{t\to0\limminus} (-t)^{\be -\ga} \tilde{h}(t).
\end{equation}
Assume that this is false.
Then there is $\eps>0$ and $t_j \nearrow 0$ so that
$
    (-t_j)^{\be - \ga} \tilde{h}(t_j) > \eps
$
for $j=1,2,\ldots$\,.
As $\be - \ga >0$, we have $\limsup_{j\to\infty}\tilde{h}(t_j)=\infty$,
and so we can for each $j$ find a $k_j>j$ such that
$\tilde{h}(t_{k_j}) > \tilde{h}(t_j)$.
By the definition of $\tilde{h}$,
and the continuity of $h$,  there is some $s_j$ such that
\[
   t_j < s_j \le t_{k_j}
   \quad \text{and} \quad
   \tilde{h}(t_{k_j})=h(s_j).
\]
Thus
\[
  (-s_j)^{-\ga} \de(s_j)
   = (-s_j)^{\be - \ga} h(s_j) 
   \ge
    (-t_{k_j})^{\be - \ga} \tilde{h}(t_{k_j})
   > \eps.
\]
But this contradicts \eqref{eq-lim-de}, 
and  hence \eqref{eq-lim=0} is true.
Finally we can find a smooth $\hat{\de}$ such that
$\td < \hat{\de}  < 2\td$ and $(-t)^{-\be} \hat{\de}(t)$ is
nondecreasing.
Note that 
\[
  \lim_{t\to0\limminus}(-t)^{-\ga}\hat{\de}(t)=0.
\]
If we define $\Thetah \supset \Theta$ in the same way as 
$\Theta$, but using $\hat{\de}$
instead of $\de$, and $\Thetah$ is regular, then also $\Theta$
is regular, by 
Proposition~\ref{prop-restrict}.
We have thus shown that we may assume \eqref{eq-wlog} without
loss of generality.

By 
Theorem~\ref{thm:barrier-char}, 
it is enough to show that there exists a  barrier family 
$\{w_C\}_{C=C_0}^\infty$ in $\Theta$  at the origin $\xi_0=(0,0)$.
The  family $\{w_C\}_{C=C_0}^\infty$  we construct will be smooth in $\Theta$, 
so that
${\partial_t w_C}-\Delta_p w_C\ge0$ is satisfied in the classical sense.
It will be constructed in the form
\[
w_C(x,t) = (Q(x,t)^{(p-1)/(p-2)} - C^{(p-1)/(p-2)}) f(t)+ \rho_C(t),
\]
where $C>0$ and
\begin{align*}
Q(x,t) &=C+\frac{p-2}{p\lm^{1/(p-1)}}
    \biggl(\frac{|x|}{(-t)^{1/\lm}}\biggr)^{p/(p-1)}, \\
 f(t) & = -\de(t)^{1/(p-2)} (-t)^{-n/\la}, \nonumber \\
 \rho_C(t) &  = -C^{1/(p-2)} \de(t) f(t). \nonumber 
\end{align*}
Note that $f<0$ and $f$ is a smooth nonincreasing function, 
by assumption \eqref{eq-wlog}.

We shall show that $w_C$ is 
a positive supersolution 
in $\Theta$ if $C$ is large enough. In the calculations below, 
we will for simplicity drop the subscript  $C$ in $w_C$ and $\rho_C$.
We shall also often omit the arguments 
and only write $w$, $Q$, $f$ and  $\rho$.
Note that $w$ is positive when
\begin{equation}\label{Eq:petr:2}
Q(x,t)^{(p-1)/(p-2)}-C^{(p-1)/(p-2)}<-\frac{\rho(t)}{f(t)} = C^{1/(p-2)} \de(t).
\end{equation}
Moreover, since $Q\ge C$ and $f<0$, we have by 
assumption~\eqref{eq-lim-de} that
\[
w(x,t)
 \le \rho(t) = -C^{1/(p-2)} \de(t) f(t)  
= C^{1/(p-2)} \de(t)^{(p-1)/(p-2)} (-t)^{-n/\la} \to0, 
\]
as $t\to0\limminus$. 
Thus, \ref{cond-third} in Definition~\ref{def-barrier} holds.

In order to prove \ref{cond-first} in Definition~\ref{def-barrier}, 
we need to show that the domain defined 
by \eqref{Eq:petr:2} contains $\Theta$. 
Indeed, in $\Theta$ we have 
\[
\biggl( \frac{|x|}{(-t)^{1/\lm}} \biggr)^{p/(p-1)}< \de(t).
\]
The elementary inequality $(1+s)^{\al} < 1+\al s(1+s)^{\al-1}$ with 
$\al=(p-1)/(p-2)>1$
then yields that for sufficiently large $C$
we have in $\Theta$, 
\begin{align}  \label{eq-elem-ineq}
Q^{(p-1)/(p-2)} - C^{(p-1)/(p-2)} 
&\le 
  C^{(p-1)/(p-2)} \biggl[ \biggl(1+\frac{(p-2)\de(t)}{pC\lm^{1/(p-1)}}
        \biggr)^{(p-1)/(p-2)} - 1\biggr] \nonumber\\
&\le C^{(p-1)/(p-2)} \frac{(p-1)\de(t)}{pC\lm^{1/(p-1)}}
      \biggl(1+\frac{(p-2)\de(t)}{pC\lm^{1/(p-1)}} \biggr)^{1/(p-2)} 
   \nonumber \\
&\le \frac{p-1}{p} 
     C^{1/(p-2)} \de(t), 
\end{align}
since $\la >1$.
Thus $\Theta$ is contained in the domain defined by \eqref{Eq:petr:2}
if $C$ is large enough.

Next we
show that $w$ is \p-superparabolic in $\Theta$.
We have
\begin{align*}
\nabla Q& =\frac{p-2}{(p-1)\lm^{1/(p-1)}}
    \frac{|x|^{(2-p)/(p-1)}x}{(-t)^{p/\lm(p-1)}},\\
\partial_t Q& =\frac{p-2}{(p-1)\lm^{p/(p-1)}}
      \biggl(\frac{|x|}{(-t)^{1/\lm}}\biggr)^{p/(p-1)}\frac1{-t}
   =\frac{p (Q-C)}{\lm(p-1)(-t)}.
\end{align*}
Since $w(x,t)=(Q^{(p-1)/(p-2)}-C^{(p-1)/(p-2)})f(t)+\rho(t)$, we have
\begin{align*}
\nabla w &= \frac{p-1}{p-2} f Q^{1/(p-2)}\nabla Q 
=\frac{Q^{1/(p-2)}f}{\lm^{1/(p-1)}} \frac{|x|^{(2-p)/(p-1)}x}{(-t)^{p/\lm(p-1)}},\\
|\nabla w|^{p-2} \nabla w
    &=\frac{Q^{(p-1)/(p-2)}|f|^{p-2}f}{\lm}\frac{x}{(-t)^{p/\lm}}.
\end{align*}
Therefore, 
\begin{align*}
\Delta_p w &=\frac{Q^{(p-1)/(p-2)}|f|^{p-2}f}{\lm}\frac{n}{(-t)^{p/\lm}}\\
       & \quad + \frac{Q^{1/(p-2)}|f|^{p-2}f}{\lm^{p/(p-1)}}
    \biggl(\frac{|x|}{(-t)^{1/\lm}}\biggr)^{p/(p-1)}\frac1{(-t)^{p/\lm}}\\
&\le \frac{Q^{(p-1)/(p-2)}}{\lm} \frac{n}{(-t)^{p/\lm}} |f|^{p-2}f,
\end{align*}
since $f<0$. Moreover,
\begin{align*}
\partial_t w &= (Q^{(p-1)/(p-2)}-C^{(p-1)/(p-2)})f' + \rho'
      +\frac{p-1}{p-2}f Q^{1/(p-2)}\partial_t Q\\
   &= (Q^{(p-1)/(p-2)}-C^{(p-1)/(p-2)})f' + \rho'
     +\frac{p fQ^{1/(p-2)}(Q-C)}{\lm(p-2)(-t)}.
\end{align*}
Combining the previous expressions yields
\begin{align*}
\partial_t w-\Delta_p w
&\ge \rho' + (Q^{(p-1)/(p-2)}-C^{(p-1)/(p-2)})f' \\
&\quad +\frac{p fQ^{1/(p-2)}(Q-C)}{\lm(p-2)(-t)}
- \frac{Q^{(p-1)/(p-2)}}{\lm} \frac{n}{(-t)^{p/\lm}} |f|^{p-2}f.
\end{align*}
In the domain given by \eqref{Eq:petr:2} (and thus in $\Theta$) we have 
\[
  0\le Q^{(p-1)/(p-2)}-C^{(p-1)/(p-2)} < -\frac{\rho(t)}{f(t)},
\]
and as $f'\le0$, this yields
\begin{align*}
\partial_t w-\Delta_p w
&\ge \rho' -\frac{\rho f'}{f}  
+\frac{p fQ^{1/(p-2)}(Q-C)}{\lm(p-2)(-t)}
- \frac{Q^{(p-1)/(p-2)}}{\lm} \frac{n}{(-t)^{p/\lm}} |f|^{p-2}f \\
&= \biggl( \biggl( \frac{\rho}{f} \biggr)' +\frac{p Q^{1/(p-2)}(Q-C)}{\lm(p-2)(-t)} 
- \frac{nQ^{(p-1)/(p-2)}}{\lm} \frac{|f|^{p-2}}{(-t)^{p/\lm}} \biggr)f(t)\\
&=:H(x,t) f(t).
\end{align*}
Since $f<0$, the last expression will be nonnegative if $H(x,t)\le0$.
We have $\rho(t)/f(t) =-C^{1/(p-2)}\de(t)$, and
in $\Theta$ we have for sufficiently large $C$,
\[
C\le Q \le C + \frac{(p-2)\de(t)}{p\lm^{1/(p-1)}} \le  2C. 
\]
This yields 
\begin{align*}
H(x,t) &\le -C^{1/(p-2)}\de'(t) 
+\frac{Q^{1/(p-2)}\de(t)}{\lm^{p/(p-1)}(-t)} 
- \frac{nQ^{(p-1)/(p-2)}}{\lm} \frac{|f|^{p-2}}{(-t)^{p/\lm}}\\
&\le -C^{1/(p-2)}\de'(t) 
+\frac{(2 
          C)^{1/(p-2)}\de(t)}{\lm^{p/(p-1)}(-t)} 
- \frac{nC^{(p-1)/(p-2)}}{\lm} \frac{|f|^{p-2}}{(-t)^{p/\lm}}.
\end{align*}
Now, 
\[
\frac{|f|^{p-2}}{(-t)^{p/\lm}} = \frac{\de(t) (-t)^{-n(p-2)/\la}}{(-t)^{p/\la}} 
= \frac{\de(t)}{-t},
\]
from which it follows that for sufficiently large $C$,
\begin{align*}
H(x,t) &\le C^{1/(p-2)} \biggl[ - \de'(t) 
+ \biggl(\frac{2 
       ^{1/(p-2)}}{\lm^{p/(p-1)}} 
- \frac{nC}{\lm} \biggr) \frac{\de(t)}{-t} \biggr] \\
&\le C^{1/(p-2)} \biggl[ - \de'(t) - \be \frac{\de(t)}{-t} \biggr] \le 0,
\end{align*}
since $(-t)^{-\be}\de(t)$ is nondecreasing by \eqref{eq-wlog}.
Thus we have shown that $w_C$ is a supersolution in $\Theta$,
if $C$ is large enough.

Finally, we show that \ref{cond-second-weak} in Definition~\ref{def-barrier}
is satisfied. 
For $(x,t) \in \overline{\Theta}\setm \{(0,0)\}$ we have 
\[ 
  \biggl(\frac{|x|}{(-t)^{1/\lambda}}\biggr)^{p/(p-1)} \le \de(t),
\]
and hence by \eqref{eq-elem-ineq},
\[
Q(x,t)^{(p-1)/(p-2)} - C^{(p-1)/(p-2)} \le \frac{p-1}{p} 
    C^{1/(p-2)} \de(t).
\]
Since $f<0$, this implies that
\begin{align*}
w_C(x,t) &\ge \frac{p-1}{p} 
       C^{1/(p-2)} \de(t) f(t) - C^{1/(p-2)} \de(t) f(t)\\
&= -\frac{1}{p} 
      C^{1/(p-2)} \de(t) f(t)
= \frac{1}{p} 
    C^{1/(p-2)} \de(t)^{(p-1)/(p-2)} (-t)^{-n/\la}. 
\end{align*}
From~\eqref{eq-wlog} we conclude that
$(-t)^{-\be}\de(t)\ge\theta$ for $t_0/2<t<0$ and some $\theta>0$.
Hence, 
\[
w_C(x,t)\ge \frac1p C^{1/(p-2)} \theta^{(p-1)/(p-2)} (-t)^{(p-2)n/\la} >0
\]
for those $t$.

As $(x,t)\in\partial\Theta$ with $|(x,t)|\ge 1/k$ implies that 
$-t\ge \eps_k$ for some $\eps_k>0$, this shows
that $\{w_C\}_{C=C_0}^\infty$ is a  barrier family for the domain
$\Theta^*=\{(x,t)\in\Theta:t>t_0/2\}$,
provided that $C_0$ is large enough.
It thus follows from Theorem~\ref{thm:barrier-char} that
$(0,0)$ is regular
with respect to $\Theta^*$ and thus with respect to $\Theta$, by
Proposition~\ref{prop-local}.
\end{proof}

Even though the domain $\Theta$ in \eqref{eq-Theta-partial-reg}
below is irregular (by Theorem~\ref{thm-Petrovskii})
and does not have a traditional
barrier at the origin, we can still
obtain regularity for some \emph{small} functions 
vanishing
at $(0,0)$ as well as at $(0,-1)$.

\begin{prop} 
Let $0 < q \le 1/p$,
\begin{equation} \label{eq-Theta-partial-reg}
   \Theta = \{(x,t) \in \R^{n+1}  : |x| < (-t) ^q \text{ and } -1 < t < 0\}.
\end{equation}
and
\[
     u(x,t)= \begin{cases}
           \displaystyle A \biggl(\frac{|x|^p}{(-t)^\be}\biggr)^{1/(p-2)},
             & \text{if } (x,t) \in \overline{\Theta} \setm \{(0,0)\}, \\
             0, & \text{if } (x,t)= (0,0),
             \end{cases}
\] 
where
\[
    0 < \be < pq\le1
    \quad \text{and} \quad
    A= \biggl(\frac{\be}{\la} \biggl(1-\frac{2}{p}\biggr)^{p-1}
       \biggr)^{1/(p-2)}.
\]
If $f \in C(\bdy \Theta)$ satisfies
\[
    |f(x,t)-f(0,0)| \le u(x,t) 
    \quad \text{for } (x,t) \in \bdy \Theta,
\]
then
\begin{equation} \label{eq-f-reg}
     \lim_{\Theta\ni (x,t) \to (0,0)} \lP f (x,t) 
     = \lim_{\Theta\ni (x,t) \to (0,0)} \uP f (x,t) 
     = f(0,0).
\end{equation}
\end{prop}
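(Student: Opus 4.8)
The plan is to follow the template of Proposition~\ref{prop-one-barrier}: I will show that $u$ is a positive, bounded, continuous \p-superparabolic function on $\Theta$ that vanishes at $(0,0)$, and then sandwich the Perron solutions between $f(0,0)-u$ and $f(0,0)+u$, letting $(x,t)\to(0,0)$.

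First I would verify the \emph{superparabolicity} of $u$, which is the heart of the matter. Writing $\al=p/(p-2)$ and noting that $C(t):=A(-t)^{-\be/(p-2)}>0$, so that $u=C(t)|x|^{\al}$ with $C(t)\al>0$, Lemma~\ref{lem-Delta-powers} applies and, together with a direct differentiation in $t$, yields
\[
\partial_t u-\Delta_p u
=\frac{|x|^{\al}}{p-2}(-t)^{-\be/(p-2)-1}\bigl[A\be-(A\al)^{p-1}\la(-t)^{1-\be}\bigr].
\]
The constant $A$ is chosen precisely so that $(A\al)^{p-1}\la=A\be$ (equivalently $A^{p-2}=(\be/\la)(1-2/p)^{p-1}$, using $1-2/p=1/\al$), whence the bracket equals $A\be\bigl(1-(-t)^{1-\be}\bigr)$. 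Since $p>2$, $A>0$, $\be>0$ and $\be<pq\le1$, while $-t<1$ on $(-1,0)$, this bracket is nonnegative; hence $\partial_t u-\Delta_p u\ge0$ and $u$ is a smooth classical supersolution away from the axis $\{x=0\}$. The borderline exponent $1-\be>0$ is exactly what makes $u$ a supersolution on the \emph{whole} interval $(-1,0)$ rather than only near $t=0$. Together with continuity this makes $u$ \p-superparabolic, the behaviour on $\{x=0\}$ being handled as in the irregularity barrier of Proposition~\ref{Prop:Petr-deg-irr}.

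Next I would record the boundary behaviour. On $\overline{\Theta}$ we have $|x|\le(-t)^q$, so $|x|^p/(-t)^{\be}\le(-t)^{pq-\be}$ with $pq-\be>0$; as $1/(p-2)>0$ this gives $u\le A(-t)^{(pq-\be)/(p-2)}\to0$ as $t\to0\limminus$. Thus setting $u(0,0)=0$ indeed makes $u$ continuous on $\overline{\Theta}$, and $u$ is bounded, with $u>0$ off the axis and $u=0$ exactly on $\{x=0\}$ (so that $|f-f(0,0)|\le u$ forces $f=f(0,0)$ at $(0,-1)$ as well).

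Finally I would run the comparison argument. Since $|f-f(0,0)|\le u$ on $\bdy\Theta$ and $u$ is continuous there, $f(0,0)+u$ is \p-superparabolic, bounded below, and satisfies $\liminf_{\Theta\ni\eta\to\xi}\bigl(f(0,0)+u(\eta)\bigr)=f(0,0)+u(\xi)\ge f(\xi)$ for every $\xi\in\bdy\Theta$; hence $f(0,0)+u\in\UU_f(\Theta)$ and $\uP f\le f(0,0)+u$. Symmetrically $f(0,0)-u\in\LL_f(\Theta)$, so $\lP f\ge f(0,0)-u$. Combining these with $\lP f\le\uP f$ gives $f(0,0)-u\le\lP f\le\uP f\le f(0,0)+u$ in $\Theta$, and letting $(x,t)\to(0,0)$ with $u\to0$ yields \eqref{eq-f-reg}. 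The main obstacle is the supersolution computation of the second paragraph — in particular recognizing that the prescribed value of $A$ cancels the two leading terms and leaves the favourable power $(-t)^{1-\be}$ — while the remaining steps are routine given the machinery already set up.
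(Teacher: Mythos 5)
Your proof is correct and follows essentially the same route as the paper: the identical computation of $\partial_t u-\Delta_p u$ via Lemma~\ref{lem-Delta-powers}, the observation that the choice of $A$ makes the bracket $A\be\bigl(1-(-t)^{1-\be}\bigr)\ge0$ because $0<-t<1$ and $\be<1$ (the paper phrases this as $\be-A^{p-2}\al^{p-1}\la(-t)^{1-\be}\ge\be-A^{p-2}\al^{p-1}\la=0$, which is the same cancellation), continuity of $u$ on $\overline{\Theta}$ from $\be<pq$, and the sandwich $f(0,0)-u\le\lP f\le\uP f\le f(0,0)+u$ via membership of $f(0,0)\pm u$ in the Perron classes. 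No gaps.
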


It is easy  to see that it is preferable to choose $\be$ as large as possible.
However, we cannot choose $\be= pq$ as then $u$ would not be continuous
at the origin.

The function $u$ constructed above fails to be a traditional barrier only in
one respect, namely  $\lim_{\Theta\ni (x,t) \to (0,-1)} u(x,t)= u(0,-1)=0$.
Thus, one requirement on $f$ is that
$f(0,-1)=f(0,0)$. Moreover, it also follows from the proof below that
\[
     \lim_{\Theta\ni (x,t) \to (0,-1)} \lP f (x,t) 
     = \lim_{\Theta\ni (x,t) \to (0,-1)} \uP f (x,t) 
     = f(0,-1) = f(0,0).
\]
Obviously one can obtain similar results for
\[
   \Theta = \{(x,t) \in \R^{n+1}  : |x| < K (-t) ^q \text{ and } t_0 < t < 0\}.
\]
when $K>0$ and $t_0 <0$.

\begin{proof}
By Lemma~\ref{lem-Delta-powers} with $\al=p/(p-2)$ we have
\[
\Delta_p u = \biggl( \frac{A\al}{(-t)^{\be/(p-2)}} \biggr)^{p-1} 
             \frac{\la}{p-2} |x|^\al 
\]
and
\[
  \partial_t u = \frac{A \be}{(p-2)(-t)^{\be/(p-2)+1}} |x|^\al .
\]
Thus, in $\Theta$, 
\begin{align*}
   \partial_t u - \Delta_p u & = \frac{A |x|^\al}{(p-2)(-t)^{\be/(p-2)+1}}
     ( \be - A^{p-2}\al^{p-1} \la (-t)^{1-\be} )\\
   & \ge \frac{A |x|^\al}{(p-2)(-t)^{\be/(p-2)+1}}
     ( \be - A^{p-2}\al^{p-1} \la )
   = 0,
\end{align*}
where we have used that $\be<1$.
Hence, $u$ is \p-superparabolic in $\Theta$. 
Moreover, as $\be<pq$, we see that $u \in C(\overline{\Theta})$.
So $u + f(0,0) \in \UU_f$ and $-u +f(0,0) \in \LL_f$.
Hence
\[
  f(0,0) = \lim_{\Theta\ni (x,t) \to (0,0)} (f(0,0)-u(x,t)) 
  \le \liminf_{\Theta\ni (x,t) \to (0,0)} \lP f(x,t)
\]
and
\[ 
   \limsup_{\Theta\ni (x,t) \to (0,0)} \uP f(x,t)
  \le \lim_{\Theta\ni (x,t) \to (0,0)} (f(0,0)+u(x,t)) 
  = f(0,0),
\]
which together with the inequality $\lP f \le \uP f$ yield
\eqref{eq-f-reg}.
\end{proof}


\end{document}